\documentclass[final,a4paper,reqno,11pt]{amsart}
\textwidth 150mm
\hoffset -16mm
\pdfoutput=1


\usepackage[final]{graphicx}
\usepackage[final]{hyperref}
\usepackage{amssymb, amstext, amsmath, amsthm, amsfonts, mathrsfs, enumerate, lmodern, ifdraft, stmaryrd, enumitem}
\usepackage[noadjust]{cite}
\usepackage[utf8]{inputenc}
\usepackage[english]{babel}
\usepackage[noabbrev]{cleveref}
\usepackage{multirow}
\usepackage{xspace}
\usepackage{float}
\usepackage{mathtools}
\usepackage{xcolor}

\lineskiplimit=0pt

\newtheorem{theorem}{Theorem}[section]
\newtheorem{corollary}[theorem]{Corollary}
\newtheorem{lemma}[theorem]{Lemma}
\newtheorem{proposition}[theorem]{Proposition}

\newtheorem*{theorem*}{Theorem}
\newtheorem*{corollary*}{Corollary}
\newtheorem{introtheorem}{Theorem}

\newtheorem{introcorollary}[introtheorem]{Corollary}

\Crefname{introcorollary}{corollary}{corollary}

\theoremstyle{definition}
\newtheorem{definition}[theorem]{Definition}

\newtheorem*{conjecture*}{Conjecture}
\newtheorem*{example*}{Example}

\theoremstyle{remark}
\newtheorem{remark}[theorem]{Remark}
\newtheorem*{acknowledgment}{Acknowledgment}


\newcommand{\cC}{\mathcal{C}}

\newcommand{\sD}{\mathsf{D}}
\newcommand{\sK}{\mathsf{K}}
\newcommand{\sM}{\mathsf{M}}

\newcommand{\kk}{\Bbbk}
\newcommand{\NN}{\mathbb{N}}
\newcommand{\QQ}{\mathbb{Q}}

\newcommand{\ZZ}{\mathbb{Z}}

\newcommand{\add}{\mathsf{add}\hspace{.01in}}

\newcommand{\ind}{\mathsf{ind}\hspace{.01in}}

\newcommand{\inj}{\mathsf{inj}\hspace{.01in}}

\renewcommand{\mod}{\mathsf{mod}\hspace{.01in}}

\newcommand{\proj}{\mathsf{proj}\hspace{.01in}}
\newcommand{\simp}{\mathsf{simp}\hspace{.01in}}

\newcommand{\cok}{\operatorname{Cok}\nolimits}
\newcommand{\End}{\operatorname{End}\nolimits}
\newcommand{\Ext}{\operatorname{Ext}\nolimits}

\newcommand{\Hm}{\operatorname{H}\nolimits}
\newcommand{\Hom}{\operatorname{Hom}\nolimits}

\newcommand{\Id}{\operatorname{Id}\nolimits}
\newcommand{\im}{\operatorname{Im}\nolimits}
\renewcommand{\ker}{\operatorname{Ker}\nolimits}
\newcommand{\op}{{\operatorname{op}\nolimits}}

\newcommand{\rad}{\operatorname{rad}\nolimits}

\newcommand{\soc}{\operatorname{soc}\nolimits}
\renewcommand{\top}{\operatorname{top}\nolimits}

\renewcommand{\subset}{\subseteq}
\newcommand{\trans}{\mkern-1.5mu \mathsf{T}}

\newcommand{\includepdf}[2][]{
   \begin{center}
      \includegraphics[#1]{#2.pdf}
   \end{center}
}

\newcommand{\tm}{$\tau$-map\xspace}
\newcommand{\tim}{$\tau^{\scriptscriptstyle{-1}}$-map\xspace}
\newcommand{\s}{\mbox{}}


\begin{document}
   \title[When does the AR translation operate linearly on $\sK_0(\mod A)$? -- Part I]{When does the Auslander--Reiten translation operate linearly on the Grothendieck group? -- Part I}

   \author{Carlo Klapproth}\address{Department of Mathematics, Aarhus University, 8000 Aarhus C, Denmark} 
   \email{carlo.klapproth@math.au.dk}
    
   \subjclass[2020]{16G70, 16E20}
   
   \maketitle

   \begin{abstract}
      For a hereditary, finite-dimensional algebra $A$ the Coxeter transformation extends the action of the Auslander--Reiten translation on the non-projective indecomposable modules to a linear endomorphism of the Grothendieck group of the category of finitely generated $A$-modules.
      It is natural to ask whether other algebras admit a similar linear extension. 
      We show that this is indeed the case for all Nakayama algebras.
      Conversely, we show that finite-dimensional algebras with non-acyclic and connected quiver admitting such a linear extension are already cyclic Nakayama algebras.
   \end{abstract}

   \section{Introduction}
   \noindent Let $A$ be a basic, finite-dimensional $\kk$-algebra over a field $\kk$.
   If $\kk$ is algebraically closed and $A$ has finite global dimension, then the bounded derived category $\sD^b(\mod A)$ of $A$ has Auslander--Reiten triangles by \cite[Theorem 4.6]{happel_triangulated_1988}, and hence a Serre functor $\mathbb{S}$ by \cite[Theorem I.2.4]{reiten_noetherian_2002}.
   It is well-known that $\tau \coloneqq \mathbb{S}[-1]$ is then a triangulated functor which induces the \emph{Coxeter transformation} $\Phi$, that is a linear endomorphism of the Grothendieck group $\sK_0(\sD^b(\mod A)) \cong \sK_0(\mod A)$, see e.g.\ \cite[Section III.1.2]{happel_triangulated_1988}, with $\Phi [X] = [\tau X]$ for $X \in \sD^b(\mod A)$. 

   The situation is more difficult for the category $\mod A$ of finitely generated $A$-modules.
   The Auslander--Reiten translation $\tau_{A}$ on $\mod A$ does not need to be a functor.
   However, inspired by the situation for hereditary algebras one can define the following.

   \begin{definition}
      An endomorphism $\Phi \in \End_{\ZZ}(\sK_0(\mod A))$ with $\Phi[M] = [\tau_A M]$ for all non-projective indecomposable $A$-modules $M$ is called \emph{\tm for $A$}.
      Similarly, an endomorphism $\Phi' \in \End_{\ZZ}(\sK_0(\mod A))$ with $\Phi'[M] = [\tau_A^{-1} M]$ for all non-injective indecomposable $A$-modules $M$ is called \emph{\tim for $A$}. \label{item:invtaumap}
   \end{definition}

   \noindent It is natural to ask which algebras do admit a \tm.
   Recall the following.

   \begin{definition}
      A finite-dimensional $\kk$-algebra $A$ is called \emph{Nakayama algebra} if the indecomposable projective and indecomposable injective $A$-modules are uniserial.
      A Nakayama algebra is called \emph{cyclic}, if it has no projective and no injective simple modules.
   \end{definition}

   \noindent
   We have the following examples of algebras admitting a \tm.
   \begin{enumerate}
      \item If $Q$ is an acyclic quiver and $\kk Q$ is its (hereditary) quiver algebra then $\kk Q$ has a \tm given by the Coxeter transformation, see \cite[Proposition VIII.2.2(b)]{auslander_representation_1995}. 
      \item For a Nakayama algebra $A$ there is a unique $\Phi \in \End_{\ZZ}(\sK_0(\mod A))$ with 
      \[
         \Phi [S] = \begin{cases} 
                        [\tau_A S] & \text{for $S \in \mod A$ simple and non-projective, } \\
                        x_S      & \text{for $S \in \mod A$ simple and projective,} 
                    \end{cases}
      \]
      where $x_S$ is an arbitrary, fixed element in $\sK_0(\mod A)$ for each simple projective $A$-module $S$.
      This map $\Phi$ is a \tm by \Cref{proposition:nakayamamap}. 
   \end{enumerate}

   Notice, a \tm can look very different from the Coxeter transformation.
   For example the algebra $A \coloneqq \kk Q/I$, where $Q$ is the quiver
   \includepdf{tikz/2-cycle}
   and $I \coloneqq \langle \alpha \beta \rangle$, is a Nakayama algebra without simple projective modules and hence has a unique \tm $\Phi$, by \Cref{proposition:nakayamamap}.
   This map transposes the two simple $A$-modules.
   On the other hand, the Cartan matrix $C_{}$ of $A$ is symmetric and hence the Coxeter matrix $C_{}^t C_{}^{-1}$ of $A$ is the identity matrix.
   So, the Coxeter transformation of $A$ is the identity map on $\sK_0(\mod A)$.

   We suspect that the class of algebras which have a \tm admits a classification. 
   Because of very different behaviour with regards to the existence of a \tm, we want to treat algebras with an acyclic $\Ext^1$-quiver (see \Cref{definition:ext-quiver}) separately from those with an acyclic $\Ext^1$-quiver.
   This paper is mainly concerned with algebras which admit a \tm and have a non-acyclic $\Ext^1$-quiver.
   We show the following, which is our main result.

   \begin{introtheorem}[\Cref{theorem:conclusion}]\label{introthm}
      Suppose the $\Ext^1$-quiver of $A$ is con\-nected and non-acyclic. 
      Then $A$ has a \tm if and only if $A$ is a cyclic Nakayama algebra.
   \end{introtheorem}

   \noindent In the language of quotients of path algebras \Cref{introthm} can be reformulated as follows.

   \begin{introcorollary}[\Cref{corollary:conclusion}]
      Let $\kk$ be an algebraically closed field.
      Suppose $A \coloneqq \kk Q / I$, where $Q$ is a connected and non-acyclic quiver and $I \lhd \kk Q$ is an admissible ideal. 
      Then $A$ has a \tm if and only if $A$ is a cyclic Nakayama algebra.
   \end{introcorollary}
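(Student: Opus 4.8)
The plan is to deduce \cref{corollary:conclusion} from \cref{theorem:conclusion} by a dictionary translation between bound quiver algebras and the $\Ext^1$-quiver. Two things have to be matched up: that the hypothesis ``$Q$ connected and non-acyclic'' is equivalent to ``$A$ has a connected and non-acyclic $\Ext^1$-quiver'', and that a connected Nakayama algebra whose Gabriel quiver has an oriented cycle is the same thing as a cyclic Nakayama algebra.

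For the first point I would use that, because $\kk$ is algebraically closed and $I \lhd \kk Q$ is admissible, $Q$ is the Gabriel quiver of $A$: its vertices biject with the isomorphism classes of simple $A$-modules $S_i$ (using $\End_A(S_i) \cong \kk$), and the number of arrows $i \to j$ equals $\dim_\kk \Ext^1_A(S_i, S_j)$. Comparing with \cref{definition:ext-quiver}, this says that the $\Ext^1$-quiver of $A$ is isomorphic to $Q$ (possibly up to reversing all arrows, which affects neither connectedness nor the presence of oriented cycles). Since moreover $A$ is indecomposable as an algebra iff $Q$ is connected, the hypotheses of \cref{theorem:conclusion} and of \cref{corollary:conclusion} coincide, and $A$ is then automatically connected. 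Therefore \cref{theorem:conclusion} yields: under the standing hypotheses, $A$ has a \tm if and only if $A$ is a (connected) Nakayama algebra.

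It remains to see that, when $Q$ is connected and non-acyclic, ``$A = \kk Q/I$ is Nakayama'' is equivalent to ``$A$ is a cyclic Nakayama algebra''. Here I would invoke the classical structure theorem for Nakayama algebras: a connected bound quiver algebra $\kk Q/I$ is Nakayama precisely when every vertex of $Q$ is the source of at most one arrow and the target of at most one arrow, equivalently when $Q$ is either a linearly oriented quiver of type $\mathbb{A}_n$ or a cyclically oriented $n$-cycle, and in the latter case every admissible ideal yields a Nakayama algebra. Since $Q$ is non-acyclic, the linearly oriented $\mathbb{A}_n$ is impossible, so $A$ is Nakayama iff $Q$ is a cyclically oriented cycle, i.e.\ iff $A$ is a cyclic Nakayama algebra. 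Chaining this with the previous paragraph proves the corollary.

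This deduction carries no genuine obstacle: all the substance is in \cref{theorem:conclusion}. The only points requiring care are the (standard) identification of the $\Ext^1$-quiver with the Gabriel quiver, which is exactly where the hypothesis that $\kk$ is algebraically closed enters, and quoting the Nakayama classification in the form that lets ``non-acyclic quiver $+$ Nakayama'' collapse to ``cyclic Nakayama''.
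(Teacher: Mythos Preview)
Your proposal is correct and follows essentially the same route as the paper: the paper derives \cref{corollary:conclusion} from \cref{theorem:conclusion} in one line by invoking \cref{remark:quivertoextquiver}, which is exactly your dictionary step identifying connectedness and non-acyclicity of $Q$ with that of the $\Ext^1$-quiver. Your additional paragraph spelling out why ``Nakayama $+$ non-acyclic $Q$'' forces ``cyclic Nakayama'' via the $\mathbb{A}_n$/$C_n$ dichotomy is left implicit in the paper (though \cref{remark:orientednakayama} would serve the same purpose), and your parenthetical about possible arrow reversal is harmless over-caution---no reversal is needed with the paper's conventions.
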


   \noindent
   This shows that for an algebra with a non-acyclic $\Ext^1$-quiver the existence of a \tm restricts the shape of the underlying quiver significantly.
   This is not the case for algebras with an acyclic $\Ext^1$-quiver as indeed all hereditary algebras admit a \tm.

   \section{Preliminaries and Notation}
   \noindent For morphisms $f \colon X \to X'$ and $g \colon X' \to X''$ we write $gf$ for the composite $g \circ f \colon X \to X''$.
   Similarly, for functors $F \colon \cC \to \cC'$ and $G \colon \cC' \to \cC''$ we write $GF$ for the composite functor $G \circ F \colon \cC \to \cC''$.
   For a category $\cC$ we denote by $\ind \cC$ the full subcategory of indecomposable objects in $\cC$.
   We fix a field $\kk$, which is not necessarily algebraically closed.
   
   For arbitrary finite-dimensional $\kk$-algebras $A$ and $B$ we refer to right \mbox{$A$-modules} as \mbox{$A$-modules} and to right $A^\op \otimes_\kk B$-modules as $A$-$B$-bimodules.
   We denote by $\mod A$ the category of finitely generated $A$-modules.
   We write $\proj A$ and $\inj A$ for the full subcategories of $\mod A$ consisting of projective and injective $A$-modules, respectively.
   For $P \in \proj A$ we denote by $\simp P$ the isomorphism classes of simple quotients of $P$.
   Abusing notation, we view the elements of $\simp P$ as simple $A$-modules by picking a representative.
   We denote the $\kk$-duality $\Hom_{\kk}(-,\kk)$ by $\sD$ and write $\nu_{A}(-) \coloneqq \sD \Hom_{A}(-,A) \colon \mod A \to \mod A$ for the Nakayama functor, $\nu_{\smash{A}}^{\smash{-1}}(-) \coloneqq \Hom_{\smash{A^{\op}}}(\sD(-), A) \colon \mod A \to \mod A$ for the inverse Nakayama functor as well as $\tau_{A}$ and $\tau_A^{-1}$ for the Auslander--Reiten translations on $\mod A$.
   We want to point out that the restriction $\nu_{A} \colon \proj A \to \inj A$ is inverse to the restriction $\nu_{\smash{A}}^{-1} \colon \inj A \to \proj A$.
   However, the unrestricted functors $\nu_{A}$ and $\nu_{A}^{\smash{-1}}$ are generally not mutually inverse equivalences on $\mod A$.
   We refer the reader to \cite[Chapter IV]{auslander_representation_1995} for a more detailed exposition.

   For a simple module $S \in \mod A$ and an arbitrary module $M \in \mod A$ we denote by $[ M:S ]$ how often $S$ appears as a composition factor in any composition series of $M$.
   By the Jordan--Hölder theorem this is independent of the chosen composition series. 
   Recall that $[M:S] \neq 0$ if and only if $\Hom_{A}(P, M) \neq 0$ where $P$ is a projective cover of $S$, see for example \cite[Exercise II.1(a)]{auslander_representation_1995}.
   Compare also \cite[Corollary III.3.6]{assem_elements_2006} for the case where $\kk$ is algebraically closed.
   The following result is folklore, see e.g.\ \cite[Lemma 1]{iwanaga}.

   \begin{remark}\label{remark:extwithsimple}
      Let $\cdots \to P_1 \to P_0 \eqqcolon P_\bullet$ be a minimal projective resolution of some \mbox{$A$-module} $M$ and $S$ be a simple $A$-module.
      Applying $\Hom_A(-, S)$ to $P_\bullet$ yields a complex $\Hom_A(P_\bullet, S)$ with zero differentials since $\Hom_A(-,S)$ vanishes on all morphisms \mbox{$f \in \Hom_A(N,N')$} satisfying $\im f \subset \rad N'$ as $\rad N'$ is contained in all maximal submodules of $N'$. 
      Hence, $\Ext_A^i(M,S) \cong \Hm^i\Hom_A(P_\bullet, S) \cong \Hom_A(P_i,S)$ for $i \in \NN$.
   \end{remark}

   Throughout this paper we assume that $A$ is a basic, finite-dimensional $\kk$-algebra and that $e \in A$ is an idempotent.
   We write $\Gamma_e \coloneqq (1-e)A(1-e)$ for the idempotent subalgebra obtained from $A$ by deleting $e$.
   By projectivisation, c.f.\ \cite[Proposition II.2.1]{auslander_representation_1995}, the functor $\Hom_{A}((1-e)A,-) \colon \add (1-e)A \to \proj \Gamma_e$ is an equivalence  and maps the basic object $(1-e)A$ to $\Gamma_e$, showing that $\Gamma_e$ is also a basic finite-dimensional $\kk$-algebra.
   Recall that the simple $\Gamma_e$-modules are in correspondence with the modules in $\simp (1-e)A$, see \Cref{lemma:sturdy}(\ref{item:simple2}).
   We define the two functors 
   \begin{align*}
      F_{e}(-) &\coloneqq - \otimes_A A(1-{e}) \colon \mod A \to \mod \Gamma_{e} \text{ and} \\
      G_{e}(-) &\coloneqq - \otimes_{\Gamma_{e}} (1-{e})A \colon \mod \Gamma_{e} \to \mod A \text{.}
   \end{align*}
   Notice that $A(1-e)$ is a projective $A^{\op}$-module. 
   Hence, $- \otimes_A A(1-e) \cong \Hom_A((1-e)A, -)$, see \Cref{lemma:sturdy}(\ref{item:ashom}).
   This means that $F_e$ is right adjoint to $G_e$ by Tensor-$\Hom$ adjunction.

   If $\kk$ is algebraically closed then it is well-known that we have an isomorphism $A \cong \kk Q/I$ where $Q$ is a quiver and $I \subset \kk Q$ is an admissible ideal.
   However, as we do normally not assume $\kk$ to be algebraically closed, we define the following as a replacement.
   \begin{definition} \label{definition:ext-quiver}
      The \emph{$\Ext^1$-quiver} $Q = (Q_0, Q_1)$ of $A$ is the quiver with
      \begin{enumerate}
         \item vertices $Q_0 = \simp A$ and
         \item arrows $Q_1 = \{\,S \to S' \,|\, \text{$S,S' \in Q_0$ and $\Ext_A^1(S,S') \neq 0$ }\}$.
      \end{enumerate}
   \end{definition} 
   \noindent Notice, the $\Ext^1$-quiver of $A$ can have $1$-cycles, that is loops.
   Furthermore, it has \emph{precisely one} arrow from $S$ to $S'$ if $S'$ has non-trivial extensions by $S$ and \emph{no arrow} from $S$ to $S'$ otherwise.
   For an admissible quotient of a path algebra the $\Ext^1$-quiver relates to the underlying quiver of the path algebra as follows. 

   \begin{remark} \label{remark:quivertoextquiver}
      Suppose $\kk$ is algebraically closed. 
      Let $Q$ be a quiver and $I \lhd \kk Q$ be an admissible ideal.
      Then by \cite[Proposition III.2.12(a)]{assem_elements_2006} the $\Ext^1$-quiver of $\kk Q/I$ is obtained from $Q$ by replacing all vertices with their corresponding simple $\kk Q /I$-modules and by identifying parallel arrows.
      In particular, acyclicity and connectedness hold for $Q$ if and only if they hold for the $\Ext^1$-quiver of $\kk Q/I$.
   \end{remark}

   \noindent Recall that a vertex of a quiver is called \emph{source vertex} if there are no arrows ending in it.
   Dually a vertex is called \emph{sink vertex} if there is no arrow starting in it.
   Notice, this does not imply that there is any arrow starting in a source vertex or ending in a sink vertex.
   Indeed, a vertex is \emph{isolated} if and only if it is both a source and a sink vertex. 
   
   For $n=1$ we define $C_n$ to be a single vertex with a loop attached to it.
   For $n \geq 2$ we denote the oriented cycle on $n$ vertices
   \includepdf{tikz/cycle}
   by $C_n$.
   For quivers $Q'$ and $Q''$ we denote the disjoint union of $Q'$ and $Q''$ by $Q' \sqcup Q''$.
   If $Q = Q' \sqcup Q''$ and $Q'$ is connected, then we call $Q'$ a \emph{component} of $Q$.

   For a basic algebra $A$ and a decomposition $1 = e_1 + \dots + e_n$ into primitive idempotents, the $A$-modules $e_i A/\rad e_i A$ for $1 \leq i \leq n$ form a full system of representatives of $\simp A$.
   
  


   For convenience, we recall some well-known facts about $F_e$ and $G_e$. 
   These functors have been studied for example in \cite[Section 5 and 6]{auslander}, \cite[Section 6.2]{green} and \cite{psaroudakis_homological_2014}.

   \begin{lemma} \label{lemma:sturdy}
      The following statements hold.
      \begin{enumerate}
         \item There is an equivalence $\add (1-e)A \leftrightarrow \proj \Gamma_e$ induced by $F_e$ and $G_e$. \label{item:equiproj}
         \item The functor $F_e$ is exact. 
               If $(1-e)Ae \in \proj \Gamma_e^{\op}$ then $G_e$ is exact. \label{item:exact}
         \item The functor $F_e$ is left inverse to $G_e$ that is $F_e G_e \cong \Id_{\mod \Gamma_e}$. \label{item:inverse}
         \item We have $F_e(-) = \Hom_A((1-e)A,-)$.\label{item:ashom}
         \item The functor $F_e$ is right adjoint to $G_e$. \label{item:adjointness}
         \item The functor $G_e$ preserves augmented (minimal) projective presentations. 
               Hence, $G_e$ preserves the properties of being non-projective and being indecomposable.\label{item:projpres}
      \end{enumerate}
      Let $1-e = e_1 + \cdots + e_n$ be a decomposition into primitive idempotents in ${\Gamma_e \subset A}$.
      \begin{enumerate}[resume]
         \item We have $F_e(e_i A / \rad e_i A) \cong e_i \Gamma_e / \rad e_i \Gamma_e$ for $1 \leq i \leq n$. 
            Hence, $F_e$ induces a bijection between $\simp (1-e)A$ and $\simp \Gamma_e$.\label{item:simple2}
         \item If $(1-e)Ae = 0$ then $G_e(e_i \Gamma_e/ \rad e_i \Gamma_e) \cong e_i A / \rad e_i A$ for $1 \leq i \leq n$.
            Hence, $G_e$ induces a bijection between $\simp \Gamma_e$ and $\simp (1-e)A$.
            \label{item:simple1}
         \item If $(1-e)Ae = 0$ then $\Ext^1_{\Gamma_e}(S,S') \cong \Ext^1_{A}(G_e(S),G_e(S'))$ for $S,S' \in \simp \Gamma_e$.\label{item:sturdyext}
      \end{enumerate}
   \end{lemma}
   \begin{proof}
      Item (\ref{item:equiproj}) follows from projectivisation, cf.\ \cite[Proposition II.2.1]{auslander_representation_1995}.
      Item (\ref{item:exact}) is a consequence of $A(1-e) \in \proj A^{\op}$ and $(1-e)A \cong \Gamma_e \oplus (1-e)Ae$ as $\Gamma_e$-modules, implying that $(1-e)A \in \proj \Gamma_e$ if and only if $(1-e)Ae \in \proj \Gamma_e$.
      Item (\ref{item:inverse}) follows, because $(1-e) A \otimes_A A(1-e) \cong \Gamma_e$ holds as $\Gamma_e$-$\Gamma_e$-bimodules.
      Item (\ref{item:ashom}) follows from 
      \[F_e(-) = - \otimes_A A(1-e) \cong - \otimes_A \Hom_A((1-e)A, A) \cong \Hom_A((1-e)A, -)\]
      because $A(1-e) \in \proj A^{\op}$, see \cite[Proposition II.4.4]{auslander_representation_1995} for the last equation.
      Then (\ref{item:adjointness}) follows from (\ref{item:ashom}) using Tensor-$\Hom$ adjunction. 

      To show (\ref{item:projpres}) notice first that $G_e(-) = - \otimes_{\Gamma_e} (1-e)A$ is right exact.
      Hence, $G_e$ maps augmented projective presentations to augmented projective presentations using (\ref{item:equiproj}). 

      We show that $G_e$ preserves minimality. 
      A projective presentation $f \colon P_1 \to P_0$ is minimal if and only if the two term complex $f \colon P_1 \to P_0$ has no non-trivial direct summand of the form $f' \colon P_1' \to P_0'$ with $f'$ being a split epimorphism, c.f.\ \cite[Corollary 4.10]{auslander}.
      Since equivalences preserve direct summands and split epimorphisms, it follows from (\ref{item:equiproj}) that $G_e$ preserves minimal projective resolutions.

      A module with a minimal projective presentation $f \colon P_1 \to P_0$ is non-projective if and only if $P_1$ is non-zero and indecomposable if and only if $f \colon P_1 \to P_0$ is indecomposable as a $2$-term complex. 
      Both properties are preserved by $G_e$ by (\ref{item:equiproj}) and the first part of (\ref{item:projpres}).

      We show (\ref{item:simple2}).
      Let $P \to e_i A \to e_i A/\rad e_i A \to 0$ be an augmented minimal projective presentation.
      Since $F_e$ is exact there is an exact sequence 
         \begin{equation} F_e(P) \to F_e(e_i A) \to F_e(e_i A / \rad e_i A) \to 0 \label{eq:projres} \end{equation}
      in $\mod \Gamma_e$ and $F_e(e_i A) \cong e_i \Gamma_e$ is an indecomposable projective $\Gamma_e$-module.
      Furthermore, $F_e(e_i A / \rad e_i A)$ is simple by \cite[Corollary 6.3d)]{auslander} and therefore the sequence (\ref{eq:projres}) implies $F_e(e_i A / \rad e_i A) \cong \top e_i \Gamma_e = e_i \Gamma_e / \rad e_i \Gamma_e$.

      We show (\ref{item:simple1}).
      Assume $(1-e)Ae = 0$ and consider again the exact sequence arising as in (\ref{eq:projres}).
      We have $\Hom_A(eA, e_i A) \subseteq \Hom_A(eA, (1-e)A) \cong (1-e)Ae = 0$, that is every map from an object in $\add eA$ to $e_i A$ is trivial.
      Then $P \in \add (1-e) A$, by minimality of the chosen projective resolution of $e_iA/\rad e_i A$.
      Hence, $F_e(P)$ is projective and (\ref{eq:projres}) is an augmented projective presentation of $F_e(e_iA /\rad e_i A) \cong e_i \Gamma_e / \rad e_i \Gamma_e$.
      Applying the functor $G_e$ to (\ref{eq:projres}) shows that $G_eF_e(P) \to G_eF_e(e_i A)$ is a projective presentation of $G_e(e_i \Gamma_e / \rad e_i\Gamma_e)$, by (\ref{item:projpres}).
      However, as $e_iA$ and  $P$ are in $\add (1-e)A$ this projective presentation is equivalent to $P \to e_i A$ by (\ref{item:equiproj}) and hence $G_e(e_i \Gamma_e / \rad e_i \Gamma_e) \cong e_i A / \rad e_i A$.
      
      We show (9). 
      Suppose ${P_1 \to P_0 \to S \to 0}$ is an augmented minimal projective presentation of the simple $\Gamma_e$-module $S$.
      Then ${G_e(P_1) \to G_e(P_0) \to G_e(S) \to 0}$ is an augmented minimal projective presentation of $G_e(S)$ by (\ref{item:projpres}).
      We know that $G_e(S')$  is a simple $A$-module as $S'$ is a simple $\Gamma_e$-module, using (\ref{item:simple1}).
      Applying \Cref{remark:extwithsimple} we obtain $\Ext_{\Gamma_e}^1(S, S') \cong \Hom_{\Gamma_e}(P_1, S')$ and  $\smash{\Ext_A^1(G_e(S), G_e(S')) \cong \Hom_A(G_e(P_1), G_e(S'))}$.
      By (\ref{item:inverse}) and (\ref{item:adjointness}) we have $\Hom_A(G_e(P_1), G_e(S')) \cong \Hom_{\Gamma_e}(P_1, S')$.
      Finally, combining all of the above equations we obtain $\Ext_{\Gamma_e}^1(S, S') \cong \Ext_A^1(G_e(S), G_e(S'))$.
   \end{proof}

   \noindent Next, we want to recall how the $\Ext^1$-quiver of $A$ and $\Gamma_e$ are related if $(1-e)Ae = 0$.

   \begin{lemma} \label{lemma:vertexremoval}
      Let $Q_A$ be the $\Ext^1$-quiver of $A$ and $Q_{\Gamma_e}$ be the $\Ext^1$-quiver of $\Gamma_e$.
      Suppose that $(1-e)Ae = 0$. 
      Then the morphism of quivers $Q_{\Gamma_e} \to Q_A$ mapping 
      \begin{enumerate}
         \item a vertex $S$ to the vertex $G_e(S)$ and 
         \item an arrow $S \to S'$ to the arrow $G_e(S) \to G_e(S')$ 
      \end{enumerate}
      is the inclusion of the full subquiver of $Q_A$ having the elements of $\simp (1-e)A$  as vertices.
   \end{lemma}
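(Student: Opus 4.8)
The plan is to deduce everything from \cref{lemma:sturdy}, specifically from the behaviour of $G_e$ on simple modules (item~(\ref{item:simple1})) and on minimal projective presentations (item~(\ref{item:projpres})), together with a comparison of the relevant $\Ext^1$-spaces over $A$ and over $\Gamma_e$.

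I would first treat the vertices. By \cref{lemma:sturdy}(\ref{item:simple1}) each simple $\Gamma_e$-module $S$ is sent to a simple $A$-module $G_e(S)$, and the proof of that item shows that, with respect to fixed decompositions $1-e = e_1 + \dots + e_k$ and $1 = e_1 + \dots + e_n$ into primitive orthogonal idempotents, $S$ corresponds to $e_i$ (for some $1 \leq i \leq k$) as a $\Gamma_e$-module exactly when $G_e(S)$ corresponds to $e_i$ as an $A$-module. Since non-isomorphic simple modules correspond to distinct idempotents of the chosen decomposition, $\iota$ is injective on vertices, and its image is precisely the set of simple $A$-modules corresponding to $e_1, \dots, e_k$, which by \cref{lemma:sturdy}(\ref{item:simple1}) is the set of simple summands of $(1-e)A/\rad(1-e)A$.

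The heart of the argument is the claim that for all simple $\Gamma_e$-modules $S, S'$ there is an isomorphism $\Ext^1_{\Gamma_e}(S, S') \cong \Ext^1_A(G_e(S), G_e(S'))$. To prove it, I would pick a minimal projective presentation $P_1 \to P_0 \to S \to 0$ in $\mod \Gamma_e$; by \cref{lemma:sturdy}(\ref{item:projpres}) its image $G_e(P_1) \to G_e(P_0) \to G_e(S) \to 0$ is a minimal projective presentation of $G_e(S)$. Since $S'$ and $G_e(S')$ are simple (the latter by \cref{lemma:sturdy}(\ref{item:simple1})), \cref{remark:determine} gives $\Ext^1_{\Gamma_e}(S,S') \cong \Hom_{\Gamma_e}(P_1, S')$ and $\Ext^1_A(G_e(S), G_e(S')) \cong \Hom_A(G_e(P_1), G_e(S'))$. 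The adjunction $\Hom_A(G_e(X), Y) \cong \Hom_{\Gamma_e}(X, F_e(Y))$ of \cref{lemma:sturdy}(\ref{item:adjointness}), together with $F_e G_e \cong \Id_{\mod \Gamma_e}$ of \cref{lemma:sturdy}(\ref{item:inverse}), then identifies $\Hom_A(G_e(P_1), G_e(S'))$ with $\Hom_{\Gamma_e}(P_1, F_e G_e(S')) \cong \Hom_{\Gamma_e}(P_1, S')$, which closes the chain. It follows that $Q_{\Gamma_e}$ has an arrow $S \to S'$ iff $Q_A$ has an arrow $G_e(S) \to G_e(S')$; combined with the vertex statement, this shows that $\iota$ is a well-defined morphism of quivers and identifies $Q_{\Gamma_e}$ with the full subquiver of $Q_A$ on the simple summands of $(1-e)A/\rad(1-e)A$, there being no multiplicity issues as the $\Ext^1$-quivers are unweighted.

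The step I expect to require the most care is this $\Ext^1$-comparison: one must apply \cref{remark:determine} with a \emph{simple} module in the second argument, which is why \cref{lemma:sturdy}(\ref{item:simple1}) is needed for $G_e(S')$, and one must start from a \emph{minimal} projective presentation of $S$ so that \cref{lemma:sturdy}(\ref{item:projpres}) guarantees its $G_e$-image is again minimal and the first syzygy terms match up under $G_e$. Everything else is formal.
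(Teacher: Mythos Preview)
Your proposal is correct and follows essentially the same route as the paper: the $\Ext^1$-comparison via a minimal projective presentation, \cref{remark:determine}, \cref{lemma:sturdy}(\ref{item:projpres}), and the adjunction together with $F_eG_e\cong\Id$ is exactly the paper's argument. The only cosmetic difference is that for injectivity on vertices the paper invokes $F_eG_e\cong\Id_{\mod\Gamma_e}$ directly rather than tracking the idempotent correspondence, but this is the same content.
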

   \begin{proof}
      By \Cref{lemma:sturdy}(\ref{item:simple1}) the functor $G_e$ induces a bijection between the vertices of $Q_{\Gamma_e}$ and the vertices of $Q_A$ which are in $\simp (1-e)A$.
      By \Cref{definition:ext-quiver} and \Cref{lemma:sturdy}(\ref{item:sturdyext}) there is an arrow $S \to S'$ in $Q_{\Gamma_e}$ if and only if there is an arrow ${G_e(S) \to G_e(S')}$ in $Q_A$.
   \end{proof}

   \begin{lemma}\label{lem:topradm}
       We have $\Ext_A^1(\top M, S) \neq 0$ for any $A$-module $M$ and any non-trivial direct summand $S$ of $\top (\rad M)$. 
   \end{lemma}
   \begin{proof}
   Consider the commutative diagram with exact rows
   \includepdf{tikz/pushout-diagram}
   where $g$ is a projection onto the direct summand $S$ and the lower row is obtained by a pushout from the upper row.
   We have $\im f' = \im f' g = \im g' f = g' (\rad M) \subset \rad M'$ using that $g$ is epic.
   If the lower row was a split sequence then $M'$ would be semisimple and hence $\rad M' = 0$ implying $\im f' = 0$.
   However, $\im f' \cong S$ is non-zero, so the lower sequence defines a non-trivial element in $\Ext_A^1(\top M, S)$.
   \end{proof}
   
   \noindent The $\Ext^1$-quiver contains information about the composition series of projective modules.

   \begin{lemma} \label{lemma:extquiver}
      Let $S,S' \in \simp A$ and $P' \to S'$ be a projective cover.
      If there is $k \in \NN$ so that $S \in \add (\top (\rad^k P'))$ then there is an oriented path of length $k$ from $S'$ to $S$ in the $\Ext^1$-quiver $Q$ of $A$.
      So, all composition factors $S$ of $P'$ admit a path from $S'$ to $S$ in $Q$.
   \end{lemma}
   \begin{proof}
      If $S$ is a composition factor of $P'$ then $S$ is a direct summand of $\top (\rad^k P')$ for some $k \geq 0$, so the second part of the lemma follows from the first part.
      
      We use induction on $k \in \NN$ to prove the first part of the lemma.
      If $k =0 $ then we have $S \in \add (\top P')$ and so $S \cong S'$.
      Hence, the constant path at $S'$ gives the desired path of length $0$.
      For the induction step suppose $S$ is a simple summand of $\top (\smash{\rad^k P'})$ for some $k \geq 1$.
      Then $\Ext_A^1(\top (\rad^{k-1} P'), S) \neq 0$, by \Cref{lem:topradm}.
      Therefore, there is a simple summand $S''$ of $\top (\rad^{k-1} P_i)$ with $\Ext_A^1(S'', S) \neq 0$.
      Hence, there is an arrow from $S''$ to $S$ in $Q$ and by induction hypothesis there is an oriented path of length $k-1$ from $S'$ to $S''$ in $Q$. 
      Concatenation gives a path of length $k$ from $S'$ to $S$ in $Q$. 
   \end{proof}

   \noindent In particular, connectedness of $Q$ corresponds to indecomposability of $A$ as an algebra, and projective and injective simple modules correspond to sink and source vertices of $Q$.

   \begin{remark} \label{remark:extquiver1}
      A component $C$ of the $\Ext^1$-quiver $Q$ of $A$ induces a central idempotent $e$ and an idempotent subalgebra $\Gamma_{C} \coloneqq \Gamma_{1-e}$.
      Indeed, let $eA$ be the projective cover of the direct sum of all vertices in $C$. 
      Any vertex $S \in C$ does not appear in any composition series of $(1-e)A$, by \Cref{lemma:extquiver}.
      Therefore, $\Hom_A(eA, (1-e)A) = 0$ and similarly one can show, $\Hom_A((1-e)A, eA)= 0$. 
      Hence, $A \cong \End_A((1-e)A \oplus eA) \cong \Gamma_{e} \times \Gamma_{1-e}$ as $\kk$-algebras, showing that $e \in A$ is a central.
   \end{remark}
   
   \begin{remark} \label{remark:extquiver2}
      Suppose $S$ is a sink vertex of $Q$. 
      Let $P \to S$ be a projective cover.
      \Cref{lemma:extquiver} implies $\rad P = 0$ as it has no composition factors.
      Hence, $S = P$ is projective.
      On the other hand, a non-sink vertex in $Q$ has non-trivial extensions and is hence non-projective.
      We see that the projective simple $A$-modules are precisely the sink vertices of $Q$.
      Using $\kk$-duality we see that source vertices in $Q$ are precisely the injective simple $A$-modules.
   \end{remark}

   \noindent Using \Cref{remark:extquiver1} we can easily extend the following result to algebras $A$ where only some of the simple $A$-modules have $\tau_A$-orbits consisting of simple modules only. 

   \begin{theorem}[{\cite[Theorem IV.2.10]{auslander_representation_1995}}] \label{theorem:arsnakayama}
      Suppose all modules in the $\tau_A$-orbits of simple $A$-modules are simple themselves.
      Then $A$ is a Nakayama algebra.
   \end{theorem}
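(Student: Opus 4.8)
Since the statement is quoted from \cite[Theorem IV.2.10]{auslander_representation_1995}, the quickest option is simply to cite it; here is how I would argue directly. I would use that $A$ is a Nakayama algebra exactly when every indecomposable projective and every indecomposable injective (right) $A$-module is uniserial, together with the observation that the hypothesis is self-dual: the duality $\sD$ identifies the $\tau_A$-orbit of a simple $A$-module with the $\tau_{A^\op}$-orbit of a simple $A^\op$-module, and $\sD S$ is simple iff $S$ is. So it suffices to prove that every indecomposable projective $A$-module is uniserial, the injective case then following by passing to $A^\op$ and applying $\sD$. (One could first split $A$ into blocks using \cref{remark:extquiver1}, but this is not necessary.)

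The heart of the argument will be the isomorphism $\soc(\tau_A S) \cong \rad P_S/\rad^2 P_S$ for a simple non-projective module $S$ with projective cover $P_S$. To obtain it I would take a minimal projective presentation $P_1 \xrightarrow{d} P_S \to S \to 0$ and use $\tau_A S = \ker\bigl(\nu_A P_1 \xrightarrow{\nu_A d} \nu_A P_S\bigr)$. Since $d(P_1) = \rad P_S$, the morphism $d$ lies in the radical of $\mod A$; as $\nu_A$ restricts to an equivalence $\proj A \to \inj A$, the morphism $\nu_A d$ is a radical morphism between the injective modules $\nu_A P_1$ and $\nu_A P_S$, and a radical morphism of injectives annihilates the socle of its source (dually to: a radical morphism of projectives has image in the radical of its target). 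Hence $\soc(\nu_A P_1) \subseteq \ker(\nu_A d) = \tau_A S$, so $\soc(\tau_A S) = \tau_A S \cap \soc(\nu_A P_1) = \soc(\nu_A P_1)$; and since $P_1 \twoheadrightarrow \rad P_S$ is a projective cover, $P_1 \cong \bigoplus_i P_{T_i}$ with $\bigoplus_i T_i \cong \rad P_S/\rad^2 P_S$ (the $T_i$ simple), so $\soc(\nu_A P_1) \cong \bigoplus_i \soc(\nu_A P_{T_i}) \cong \bigoplus_i T_i \cong \rad P_S/\rad^2 P_S$, using that $\nu_A$ sends the projective cover of $T_i$ to the injective envelope of $T_i$.

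With this in hand I would finish as follows. For each simple $S$: if $S$ is projective then $\rad P_S = 0$; otherwise $\tau_A S$ is simple by hypothesis, so $\rad P_S/\rad^2 P_S \cong \soc(\tau_A S)$ is simple (and nonzero, since $\rad P_S \neq 0$). Thus $\rad P_S/\rad^2 P_S$ is simple or zero for every simple $S$. An easy induction on $k$ then shows that, for any indecomposable projective $P$, every radical layer $\rad^k P/\rad^{k+1}P$ is simple or zero — if $\rad^k P/\rad^{k+1}P \cong U$ is simple, then $\rad^k P$ has top $U$, hence is a quotient of $P_U$, so $\rad^{k+1}P$ is a quotient of $\rad P_U$ and therefore $\rad^{k+1}P/\rad^{k+2}P$ is a quotient of the simple-or-zero module $\rad P_U/\rad^2 P_U$ — so $P$ is uniserial. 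Running the same argument over $A^\op$ and dualising gives uniseriality of all indecomposable injective $A$-modules, and hence $A$ is a Nakayama algebra.

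I expect the only genuine obstacle to be the isomorphism $\soc(\tau_A S)\cong \rad P_S/\rad^2 P_S$, or rather the realisation that the \emph{minimality} of the projective presentation is exactly what makes $\nu_A d$ radical and hence socle-killing — this is what converts ``$\tau_A S$ is simple'' into ``$\rad P_S/\rad^2 P_S$ is simple''. Everything afterwards is a formal radical-layer induction together with the self-duality of the hypothesis.
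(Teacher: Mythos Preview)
The paper does not give its own proof of this statement: it is quoted verbatim from \cite[Theorem IV.2.10]{auslander_representation_1995} and used as a black box. So there is nothing to compare against in the paper itself; your opening remark that ``the quickest option is simply to cite it'' already matches what the paper does.

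Your direct argument is correct. The only point worth checking is the claim that a radical morphism $f \colon I \to J$ between injectives annihilates $\soc I$: for $I$ indecomposable this holds because $f$ is non-radical iff it is split monic iff it is injective iff $f|_{\soc I}$ is injective (using that $\soc I$ is simple and essential), and the general case follows componentwise. With that, your isomorphism $\soc(\tau_A S) \cong \rad P_S/\rad^2 P_S$ goes through, and the radical-layer induction is routine. This is essentially the route taken in \cite{auslander_representation_1995} as well (their Lemma IV.2.9 establishes the analogous fact that $\rad P_S/\rad^2 P_S$ is simple under the hypothesis), so your proof is not a genuinely different approach but rather a faithful reconstruction of the cited one.
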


   \begin{corollary} \label{corollary:permutesummand}
      Suppose $\simp (1-e)A$ is a union of $\tau_A$-orbits.
      Then we have a decomposition $A = \Gamma_e \times \Gamma_{1-e}$ where $\Gamma_e$ is a Nakayama algebra.
   \end{corollary}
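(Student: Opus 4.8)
The plan is to split $A$ into two blocks, one of which is $\Gamma_e$, and then recognise $\Gamma_e$ as Nakayama via \cref{theorem:arsnakayama}. Write $\cS$ for the set of simple summands of $(1-e)A/\rad(1-e)A$; these are exactly the simple $A$-modules corresponding to the primitive idempotent summands of $1-e$, so $(1-e)A$ is the projective cover of $\bigoplus_{S\in\cS}S$. By hypothesis $\tau_A$ restricts to a permutation of the finite set $\cS$. Since an almost split sequence never splits, $\tau_A N$ is never injective for indecomposable non-projective $N$; hence every $S\in\cS$ is non-projective (otherwise $\tau_A S$ would be $0$) and, being $\tau_A$ of some member of $\cS$, also non-injective. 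Thus $\tau_A^{-1}$ is defined on all of $\cS$ and restricts to the inverse permutation; in particular, for each $S\in\cS$ both $\tau_A S$ and $\tau_A^{-1}S$ lie in $\cS$ and are therefore simple.

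The heart of the argument is a local statement about an arbitrary simple module $S$ with minimal projective presentation $P_1\xrightarrow{p_1}P_0\to S\to 0$: \emph{if $\tau_A S$ is simple, then $\rad P_0$ has simple top $\tau_A S$, and consequently $S$ emits exactly one arrow in the $\Ext^1$-quiver $Q$ of $A$, namely $S\to\tau_A S$}. I would prove this by applying $(-)^{*}=\Hom_A(-,A)$: since $p_1$ is a projective cover of $\rad P_0$ its image lies in the radical, so $p_1^{*}$ has image in $\rad(P_1^{*})$ and therefore $\top(\tr_A S)=\top(\cok p_1^{*})=\top(P_1^{*})=\sD(\top\rad P_0)$. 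As $\tau_A S=\sD\tr_A S$ is simple, $\tr_A S$ is simple, so $\top\rad P_0$ is a single simple module $S'$, whence $P_1$ is the projective cover of $S'$ and $\tau_A S=S'$; now \cref{remark:determine} gives $\Ext^1_A(S,T)\neq 0$ precisely for $T=S'$. Applying the same to $A^{\op}$ and the simple $A^{\op}$-module $\sD S$, and using $\tau_A^{-1}S=\sD\tau_{A^{\op}}(\sD S)$ together with the fact that the $\Ext^1$-quiver of $A^{\op}$ is obtained from $Q$ by reversing all arrows, yields dually that whenever $\tau_A^{-1}S$ is simple, $\tau_A^{-1}S\to S$ is the unique arrow of $Q$ into $S$.

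Applying this to each $S\in\cS$ shows that every arrow of $Q$ with an endpoint in $\cS$ has both endpoints in $\cS$; hence $Q=Q'\sqcup Q''$ with $Q'$ the full subquiver on $\cS$. Since $(1-e)A$ is the projective cover of $\bigoplus_{S\in\cS}S$, \cref{remark:extquiver1} (applied with its idempotent taken to be $1-e$) then gives that $e$ is central and $A\cong\Gamma_e\times\Gamma_{1-e}$. Finally, centrality of $e$ identifies $\mod\Gamma_e$ with the direct factor of $\mod A$ consisting of the modules killed by $e$, so $\tau_{\Gamma_e}$ is the restriction of $\tau_A$; the simple $\Gamma_e$-modules are exactly the members of $\cS$, on which $\tau_{\Gamma_e}=\tau_A$ acts as a permutation, so every module in a $\tau_{\Gamma_e}$-orbit of a simple module is simple, and $\Gamma_e$ is a Nakayama algebra by \cref{theorem:arsnakayama}. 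I expect the main obstacle to be the local statement, i.e.\ extracting from simplicity of $\tau_A S$ that $\rad P_0$ has simple top; once that is established, the block decomposition is supplied by \cref{remark:extquiver1} and the Nakayama conclusion by \cref{theorem:arsnakayama}.
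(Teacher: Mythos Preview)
Your proof is correct and follows the same global architecture as the paper: establish that the $\Ext^1$-quiver $Q$ disconnects as $Q'\sqcup Q''$ with $Q'$ the full subquiver on $\cS$, invoke \cref{remark:extquiver1} to split $A$, and then apply \cref{theorem:arsnakayama} to the factor $\Gamma_e$.

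The one substantive difference lies in how you prove the local statement that, for $S\in\cS$, the only arrow of $Q$ leaving $S$ is $S\to\tau_A S$. The paper argues via the almost split sequence: given any non-split extension $0\to S'\to X'\to S\to 0$, the map $X'\to S$ factors through the middle term $X$ of the Auslander--Reiten sequence ending in $S$; since $\tau_A S$ is simple, both $X$ and $X'$ are uniserial of length two, and the resulting map between them is forced to be an isomorphism, so $S'\cong\tau_A S$. You instead compute the transpose directly from the minimal projective presentation, using that $p_1^*$ is a radical morphism (because $(-)^*$ is a duality on projectives) to conclude $\top(\tr_A S)=\top(P_1^*)=\sD(\top\rad P_0)$, whence simplicity of $\tr_A S$ forces $\top\rad P_0$ to be simple and equal to $\tau_A S$. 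Both routes are short and self-contained; the paper's is marginally more conceptual, while yours yields the explicit identification $\tau_A S=\top\rad P_0$ as a by-product and avoids needing to argue that the middle term of the Auslander--Reiten sequence is indecomposable. Your preliminary observation that every $S\in\cS$ is non-projective and non-injective is also a useful clarification that the paper leaves implicit when it passes to the dual direction.
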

   \begin{proof}
      Let $S \in \simp (1-e)A$ and $\Ext_A^1(S, S') \neq 0$ for some simple module $S' \in \mod A$.
      There is a non-split short exact sequence $0 \to S' \to M' \to S \to 0$.
      In particular, $S \notin \proj A$ and we have an Auslander--Reiten sequence $0 \to \tau_A S \to M \to S \to 0$.
      As $M \to S$ is a right almost split morphism we obtain the dashed morphism of the diagram
      \includepdf{tikz/ar-factor}
      using that $M' \to S$ is not a split epimorphism and that $M'$ is indecomposable. 
      But because $M'$ and $M$ must be uniserial of length $2$ the dashed morphism must be an isomorphism and therefore $S' \cong \tau_A S$ holds.
      Hence, $\Ext_A^1(S,S') \neq 0$ can hold only if $S' \in \simp (1-e)A$.
      In the same way we can show that $\Ext_A^1(S'',S) \neq 0$ for $S \in \simp(1-e)A$ and $S'' \in \simp A$ implies $S'' \in \simp(1-e)A$.
      But the $\Ext^{1}$-quiver of $A$ is then disconnected and $A = \Gamma_e \times \Gamma_{1-e}$ by \Cref{remark:extquiver1} for a central idempotent $e$.
      Using for example \Cref{proposition:artranslate} one can show that $\tau_{\Gamma_e} = F_e \tau_A G_e$.
      This shows that all $\Gamma_e$-modules in the $\tau_{\Gamma_e}$-orbits of simple $\Gamma_e$-modules are simple using \Cref{lemma:sturdy}(\ref{item:simple2}) and (\ref{item:simple1}).
      Now, \Cref{theorem:arsnakayama} applied to $\Gamma_e$ shows that $\Gamma_e$ is a Nakayama algebra.
   \end{proof}

   \noindent Next, recall the following well-known extension of \cite[Proposition II.4.4(a)]{auslander_representation_1995}.

   \begin{lemma} \label{lemma:ass}
      The family of morphisms 
      \begin{alignat*}{2}
         \psi_{M,N} \colon  N \otimes_A \Hom_A(M, A)   & \to \Hom_A(M,N) \\ 
         n \otimes f   &\mapsto (m \mapsto nf(m))
         \intertext{for $M \in \mod A$ and $N \in \mod A^{\op}$, defines a natural transformation}
         \psi \colon {-}_{2} \otimes_A \Hom_A(-_{1}, A) &\to \Hom_A(-_{1}, -_{2})
      \end{alignat*}
      of functors $\mod A \times \mod A^{\op} \to \mod \kk$. Furthermore,
      \begin{enumerate}
         \item if ${}_B N_A$ is an $B$-$A$-bimodule then $\psi_{M,N}$ is also a $B^{\op}$-module morphism from \mbox{${}_B N \otimes_A \Hom_A(M, A)$} to $\Hom_A(M, {}_B N)$ for any $M \in \mod A$\label{item:Bstructure} and
         \item if $N_A \in \proj A$ then $\psi_{M,N}$ is an isomorphism for all $M \in \mod A$.\label{item:isomorphism}
      \end{enumerate}
      In particular, if the conditions of (\ref{item:Bstructure}) and (\ref{item:isomorphism}) are satisfied then there is natural isomorphism $\phi \colon {}_B N \otimes_A \Hom_A(-,A) \to \Hom_{A}(-, {}_B N)$ of functors $\mod A \to \mod B^{\op}$. 
   \end{lemma}
   \begin{proof}
      By \cite[Proposition II.4.4(a)]{auslander_representation_1995} the family of morphisms $\psi$ is a natural transformation.
      Item (\ref{item:Bstructure}) follows since $(\psi_{M,N}(bn,f))(m) = bnf(m) = (b\psi_{M,N}(n,f))(m)$
      for $m \in M$, $n \in N$, $b \in B$ and $f \in \Hom_A(M,A)$.
      
      The morphism $\psi_{M,A}$ is the canonical isomorphism between $A \otimes_A \Hom_A(M,A)$ and $\Hom_A(M,A)$, so (\ref{item:isomorphism}) holds for $N_A = A_A$.
      Since $\psi$ is natural and both $-_1\otimes_A\Hom_A(-_2,A)$ and $\Hom_A(-_2,-_1)$ are additive in $-_{1}$, this implies (\ref{item:isomorphism}) for $N_A \in \add A = \proj A$.

      For any $B$-$A$-bimodule ${}_B N_A$ which is projective as an $A$-module the natural transformation $\phi \colon {}_B N \otimes_A \Hom_A(-,A) \to \Hom_{A}(-, {}_B N)$ defined by $\phi_M \coloneqq \psi_{M,N}$ for $M \in \mod A$ is a natural isomorphism by (\ref{item:Bstructure}) and (\ref{item:isomorphism}) since a morphism of $B^{\op}$-modules is an isomorphism if and only if it is an isomorphism of $\kk$-modules.
      This shows the last part of the statement.
   \end{proof}

   \noindent Finally, recall the following from linear algebra.
   \begin{lemma} \label{lemma:Nmatrices}
      Let $X, X' \in \sM_n(\NN)$ be matrices satisfying $XX' = 1_{n\times n}$.
      Then $X$ and $X'$ are permutation matrices (cf.~e.g.~\cite[Section 1.1.2]{serre_matrices_2002}).
   \end{lemma}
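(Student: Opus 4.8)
The plan is a short entrywise argument. First I would work over $\QQ$ and use that $\det M\cdot\det M'=\det(1_{n\times n})=1$; since both determinants are integers, they are nonzero, so $M$ and $M'$ are invertible, $M'=M^{-1}$, and neither matrix has a zero row or a zero column.

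Next I would exploit non-negativity of the entries, coordinate by coordinate. Fix $i$ and look at the diagonal entry $(MM')_{ii}=\sum_{k}M_{ik}M'_{ki}=1$: each summand is a non-negative integer, so exactly one of them is $1$ and the others vanish. Writing $\sigma(i)$ for the index achieving the $1$, we get $M_{i,\sigma(i)}=M'_{\sigma(i),i}=1$ and $M_{ik}M'_{ki}=0$ for $k\neq\sigma(i)$. Similarly, for $l\neq i$ the relation $(MM')_{il}=\sum_{k}M_{ik}M'_{kl}=0$ forces $M_{ik}M'_{kl}=0$ for every $k$ and every $l\neq i$.

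The key step is to show that row $i$ of $M$ is supported only at $\sigma(i)$. If $M_{ik}\neq 0$ for some $k\neq\sigma(i)$, then the diagonal relation gives $M'_{ki}=0$ and the off-diagonal relations give $M'_{kl}=0$ for all $l\neq i$, so row $k$ of $M'$ vanishes --- contradicting the invertibility of $M'$. Hence each row of $M$ has a unique non-zero entry, and $(MM')_{ii}=M_{i,\sigma(i)}M'_{\sigma(i),i}=1$ with both factors positive integers shows that entry equals $1$. So $M$ is the $0/1$ matrix whose single $1$ in row $i$ sits in column $\sigma(i)$; invertibility of $M$ makes $\sigma$ injective, hence a permutation, so $M$ is a permutation matrix and $M'=M^{-1}$ is then also a permutation matrix.

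There is no real obstacle here; the only point not to skip is the invertibility of $M'$ --- equivalently that $MM'=1_{n\times n}$ already forces $M'M=1_{n\times n}$ --- which is precisely what turns the key step into a genuine contradiction, and non-negativity of all entries is used both to extract $\sigma$ from the diagonal and to split the off-diagonal sums termwise. Conceptually the statement just records that $M$ and $M^{-1}$ both map the cone $(\RR_{\geq 0})^{n}$ into itself, so $M$ permutes its extremal rays with scaling factors that are units in $\ZZ$, i.e. equal to $1$; one could also simply invoke \cite[section~1.1.2]{serre_matrices_2002}.
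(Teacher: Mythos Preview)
Your argument is correct. The determinant step gives invertibility of both matrices over $\QQ$, hence $M'M=1_{n\times n}$, and then the entrywise analysis goes through exactly as you describe: the diagonal relation pins down $\sigma(i)$, and any stray nonzero entry $M_{ik}$ with $k\neq\sigma(i)$ would force the $k$-th row of $M'$ to vanish.

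The paper proves the same lemma by a different, more structural route. After observing $M'M=1_{n\times n}$ (via the same reduction to $\sM_n(\QQ)$), it notes that $M$ and $M^{-1}$ restrict to mutually inverse maps $\NN^n\to\NN^n$, hence to semigroup automorphisms of $(\NN^n\setminus\{0\},+)$. The standard basis vectors are characterised intrinsically as the elements of this semigroup that are not a sum of two other elements, so any automorphism must permute them; this forces $M$ to be a permutation matrix. Your closing remark about extremal rays of the cone $(\RR_{\geq 0})^n$ is essentially the real-geometric version of this same idea. Compared with the paper's proof, your entrywise argument is more hands-on and self-contained (no appeal to semigroup structure), while the paper's argument isolates the conceptual reason and would generalise more readily, for instance to other ordered monoids whose irreducible elements form a basis.
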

   \begin{proof}
      In the ring $\sM_n(\QQ) \supset \sM_n(\NN)$ we have $X' = X^{-1}$ by \cite[Proposition 2.2.1]{serre_matrices_2002}.
      Hence, $X'X = 1_{n \times n}$ holds in $\sM_n(\NN)$, so $X$ and $X'$ define bijective maps from $\NN^n$ to $\NN^n$.
      They map $\NN^n \setminus \{0\}$ into $\NN^n \setminus \{0\}$ and hence form semigroup automorphisms of $(\NN^n \setminus \{0\}, +)$.
      The only elements of $(\NN^n \setminus \{0\}, +)$ that are not the sum of any two other elements are the standard basis vectors.
      But then $X$ and $X'$ have to permute those elements.
   \end{proof}

   \section{Core lemmas} \label{sec:core}

   \noindent In this \namecref{sec:core} we want to build the core lemmas for our classification result. 
   The main results of this section are the formula of \Cref{proposition:artranslate} showing how to calculate $\tau_{\Gamma_e}$ using $\tau_A$ as well as \Cref{lemma:fork2}.

   We want to show that \tm{s} are inherited by idempotent subalgebras arising from the removal of injective simple modules.

   \begin{lemma} \label{lemma:nakayama}
      There is a natural isomorphism $F_e \nu_A G_e \cong \nu_{\Gamma_e}$.
   \end{lemma}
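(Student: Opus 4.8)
The plan is to strip off the two copies of the $\kk$-duality $\sD$ that appear on the two sides and reduce the statement to an identity between $\Hom$- and tensor functors. Since $\sD\sD$ is naturally isomorphic to the identity, we have $\sD\nu_A = \sD\sD\Hom_A(-,A) \cong \Hom_A(-,A)$ as functors $\mod A \to \mod A^{\op}$, and likewise $\sD\nu_{\Gamma_e} \cong \Hom_{\Gamma_e}(-,\Gamma_e)$ as functors $\mod\Gamma_e \to \mod\Gamma_e^{\op}$. Under these identifications $F_e'\,\sD\nu_A\,G_e$ becomes $(1-e)A \otimes_A \Hom_A(G_e(-),A)$, so it suffices to produce a natural isomorphism
\[
   (1-e)A \otimes_A \Hom_A(G_e(-),A) \;\cong\; \Hom_{\Gamma_e}(-,\Gamma_e)
\]
of functors $\mod\Gamma_e \to \mod\Gamma_e^{\op}$.

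The first step is to apply \cref{lemma:ass} with $B = \Gamma_e$ and $Y = {}_{\Gamma_e}\bigl((1-e)A\bigr)_{A^{\op}}$; this bimodule is projective as a right $A$-module since $(1-e)A$ is a direct summand of $A_A$, so the lemma supplies a natural isomorphism $(1-e)A \otimes_A \Hom_A(-,A) \cong \Hom_A(-,(1-e)A)$ of functors $\mod A \to \mod\Gamma_e^{\op}$. Precomposing with $G_e$ turns the left-hand side of the display into $\Hom_A(G_e(-),(1-e)A)$.

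The second step is to use that $F_e$ is right adjoint to $G_e$ (\cref{lemma:sturdy}(\ref{item:adjointness})), which gives a natural isomorphism $\Hom_A(G_e(-),N) \cong \Hom_{\Gamma_e}(-,F_e N)$ for $N \in \mod A$. Taking $N = (1-e)A$ and using $F_e\bigl((1-e)A\bigr) = (1-e)A \otimes_A A(1-e) \cong \Gamma_e$, as $\Gamma_e$-$\Gamma_e^{\op}$-bimodules, from \cref{lemma:sturdy}(\ref{item:inverse}), this identifies $\Hom_A(G_e(-),(1-e)A)$ with $\Hom_{\Gamma_e}(-,\Gamma_e)$. Composing this with the isomorphism of the first step and with the two double-duality isomorphisms then yields the desired $F_e'\,\sD\nu_A\,G_e \cong \sD\nu_{\Gamma_e}$.

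The step that I expect to need the most care is keeping track of the module structures, specifically checking that every isomorphism in the chain is one of right $\Gamma_e^{\op}$-modules (equivalently, of left $\Gamma_e$-modules). On $\Hom_A(G_e(-),(1-e)A)$ this structure is the one induced by the left $\Gamma_e$-action on $(1-e)A$, and under the adjunction it has to be matched with left multiplication on $\Gamma_e$; this should follow from naturality of the adjunction isomorphism in its second argument, applied to the $A$-linear endomorphisms of $(1-e)A$ given by left multiplication by elements of $\Gamma_e$, together with the fact that the identification $F_e\bigl((1-e)A\bigr) \cong \Gamma_e$ of \cref{lemma:sturdy}(\ref{item:inverse}) is a bimodule isomorphism. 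If this turns out to be awkward, an alternative is to write every functor involved as tensoring with, or homming out of, an explicit bimodule and to track the $\Gamma_e$-actions by hand.
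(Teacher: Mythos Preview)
Your proof is correct and follows essentially the same route as the paper: unravel $\sD\nu_A$ and $\sD\nu_{\Gamma_e}$ to $\Hom_A(-,A)$ and $\Hom_{\Gamma_e}(-,\Gamma_e)$, apply \cref{lemma:ass} with $Y=(1{-}e)A$, then use the $(G_e,F_e)$-adjunction together with $F_e((1{-}e)A)\cong\Hom_A((1{-}e)A,(1{-}e)A)\cong\Gamma_e$. The paper's proof is just a terser version of this chain; your added remarks about tracking the left $\Gamma_e$-action are a welcome elaboration rather than a different argument.
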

   \begin{proof}
      We have $F_e(-) \cong \Hom_A((1-e)A, -)$ by \Cref{lemma:sturdy}(\ref{item:ashom}) and therefore
      \[F_e \sD(-) \cong \Hom_A((1-e)A, \Hom_\kk(-,\kk)) \cong \Hom_\kk((1-e)A\otimes_A-,\kk) = \sD((1-e)A \otimes_A-)\] 
      by Tensor-$\Hom$ adjunction.
      This, \Cref{lemma:ass} and Tensor-$\Hom$ adjunction give a sequence
      \begin{align*}
         F_e (\sD \Hom_A(- \otimes_{\Gamma_e} (1-e) A, A)) & \cong \sD ((1-e)A \otimes_A \Hom_A(- \otimes_{\Gamma_e} (1-e)A, A)) \\
                                                                        & \cong \sD \Hom_A(- \otimes_{\Gamma_e} (1-e)A, (1-e)A) \\
                                                                        & \cong \sD \Hom_{\Gamma_e}(-, \Hom_A((1-e)A, (1-e)A))
      \end{align*}
      of natural isomorphisms, where the first functor is  $F_e \nu_A G_e$ and the last functor is naturally isomorphic to $\nu_{\Gamma_e}$ as $\Gamma_e \cong \Hom_A((1-e)A, (1-e)A)$.
   \end{proof}

   \begin{proposition} \label{proposition:artranslate}
      There is an isomorphism $\tau_{\Gamma_e}M \cong F_e \tau_A G_e(M)$ for $M \in \mod \Gamma_e$.
   \end{proposition}
   \begin{proof}
         Let $P_1 \to P_0 \to M \to 0$ be an augmented minimal projective presentation.
         \Cref{lemma:sturdy}(\ref{item:projpres}) implies that $G_e(P_1) \to G_e(P_0) \to G_e(M) \to 0$ is an augmented minimal projective presentation of $G_e(M) \in \mod A$.
         Applying $\nu_A$ yields an exact sequence 
         \[0 \to \tau_A G_e(M) \to \nu_A G_e(P_1) \to \nu_A G_e(P_0) \to \nu_A G_e(M)  \to 0\]
         which yields an exact sequence
         \begin{align*} 
            0 \to F_e \tau_A G_e(M) \to \nu_{\Gamma_e }P_1 \to \nu_{\Gamma_e} P_0 \to \nu_{\Gamma_e} M \to 0
         \end{align*}
         upon applying the exact functor $F_e$ and using \Cref{lemma:nakayama} for the three terms on the right.
         But, because $P_1 \to P_0 \to M \to 0$ was an augmented minimal projective presentation we have $F_e \tau_A G_e(M) \cong \tau_{\Gamma_e}M$.
   \end{proof}

   \begin{corollary} \label{corollary:inherittau}
      If $A$ has a \tm and $(1-e)Ae \in \proj \Gamma_e^{\op}$ then $\Gamma_e$ {has a \tm.}
   \end{corollary}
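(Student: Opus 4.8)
The idea is to transport the given \tm $\Phi$ for $A$ along the functors relating $\mod A$ and $\mod \Gamma_e$, using \cref{proposition:artranslate}, which is available since $(1-e)Ae$ is assumed projective over $\Gamma_e^{\op}$. The functors $G_e$ and $F_e$ are both exact under this hypothesis (\cref{lemma:sturdy}(\ref{item:exact})), so they induce group homomorphisms $[G_e] \colon \sK_0(\mod \Gamma_e) \to \sK_0(\mod A)$ and $[F_e] \colon \sK_0(\mod A) \to \sK_0(\mod \Gamma_e)$ on Grothendieck groups, and $F_e G_e \cong \Id$ by \cref{lemma:sturdy}(\ref{item:inverse}) gives $[F_e] \circ [G_e] = \Id_{\sK_0(\mod \Gamma_e)}$. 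The natural candidate for a \tm for $\Gamma_e$ is therefore
\[
   \Phi_{\Gamma_e} \coloneqq [F_e] \circ \Phi \circ [G_e] \in \End_\ZZ(\sK_0(\mod \Gamma_e)).
\]

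First I would check that this is well-defined as a $\ZZ$-linear endomorphism: this is immediate from exactness of $F_e$ and $G_e$, so $[F_e]$ and $[G_e]$ are honest homomorphisms of Grothendieck groups, and $\Phi$ is linear by hypothesis. Next comes the main verification: for every non-projective $M \in \ind \Gamma_e$ one must show $\Phi_{\Gamma_e}[M] = [\tau_{\Gamma_e} M]$. By \cref{lemma:sturdy}(\ref{item:projpres}), $G_e(M)$ is again non-projective, and since $G_e$ preserves minimal projective presentations it maps indecomposables with no projective summands to indecomposables; thus $G_e(M) \in \ind A$ is non-projective, so $\Phi[G_e(M)] = [\tau_A G_e(M)]$ because $\Phi$ is a \tm for $A$. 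Applying $[F_e]$ and using \cref{proposition:artranslate}, namely $\tau_{\Gamma_e} = F_e \tau_A G_e$, yields
\[
   \Phi_{\Gamma_e}[M] = [F_e]\,\Phi\,[G_e][M] = [F_e][\tau_A G_e(M)] = [F_e \tau_A G_e(M)] = [\tau_{\Gamma_e} M],
\]
as required. A small point to handle carefully is that $G_e(M)$ might a priori fail to be indecomposable even though $M$ is; but the argument in \cref{lemma:sturdy}(\ref{item:projpres}) via the equivalence $\add(1-e)A \leftrightarrow \proj \Gamma_e$ shows $G_e$ reflects (and preserves) the property "has no nonzero projective summand and has both terms of the minimal projective presentation nonzero," which is exactly what is needed; alternatively one decomposes $G_e(M)$ into indecomposables, none of which is projective, and applies the \tm property summand-wise, exploiting linearity of $\Phi$ — either route closes the gap.

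The main obstacle is essentially the one just flagged: ensuring that the \tm property of $\Phi$ for $A$, which is a statement only about non-projective \emph{indecomposable} modules, can be invoked at $G_e(M)$. Once one knows $[\tau_A G_e(M)] = \Phi[G_e(M)]$ in $\sK_0(\mod A)$ — whether because $G_e(M)$ is indecomposable non-projective, or by splitting into indecomposable non-projective summands and using linearity plus additivity of $\tau_A$ on $\sK_0$ — everything else is formal: $F_e$ being exact makes $[F_e]$ well-defined, \cref{proposition:artranslate} supplies the identity $\tau_{\Gamma_e} = F_e \tau_A G_e$, and the composition $\Phi_{\Gamma_e} = [F_e]\circ\Phi\circ[G_e]$ is the desired \tm for $\Gamma_e$.
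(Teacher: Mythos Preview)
Your approach is essentially identical to the paper's: define $\Phi_{\Gamma_e} \coloneqq [F_e] \circ \Phi \circ [G_e]$ using exactness of $F_e$ and $G_e$ from \cref{lemma:sturdy}(\ref{item:exact}), then verify the \tm property via \cref{proposition:artranslate} and the fact that $G_e(M)$ is non-projective by \cref{lemma:sturdy}(\ref{item:projpres}). The paper's proof is in fact terser than yours and does not pause over the indecomposability of $G_e(M)$; your observation that this point deserves care, together with your fallback argument (decompose $G_e(M)$ into indecomposables, note none can be projective since any projective summand would lie in $\add(1-e)A$ and hence give a nonzero projective summand of $M \cong F_e G_e(M)$, then apply the \tm property summand-wise), is a genuine improvement in rigor over what the paper writes. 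Your first justification for indecomposability (``preserves minimal projective presentations, hence preserves indecomposables'') is not quite right as stated, but the alternative route you give suffices.
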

   \begin{proof}
      Let $\Phi_A \colon \sK_0(\mod A) \to \sK_0(\mod A)$ be a \tm of $A$.
      Since $F_e$ and $G_e$ are exact by \Cref{lemma:sturdy}(\ref{item:exact}), they induce linear maps $F_e^\ast \coloneqq \sK_0(F_e) \colon \sK_0(\mod A) \to \sK_0(\mod \Gamma_e)$ and $G_e^\ast \coloneqq \sK_0(G_e) \colon \sK_0(\mod \Gamma_e) \to \sK_0(\mod A)$, respectively.
      \Cref{proposition:artranslate} now implies that $\Phi_{\Gamma_e} \coloneqq F_e^* \Phi_A G_e^*$ is a \tm for $\Gamma_e$. Indeed, for $M \in \ind \mod \Gamma_e$ non-projective
      \[\Phi_{\Gamma_e} [M] = F_e^* \Phi_A G_e^* [M] = F_e^* \Phi_A[G_e(M)] = F_e^* [\tau_A G_e(M)] = [F_e \tau_A G_e (M)] = [\tau_{\Gamma_e} M] \]
      holds, because $G_e(M) \in \ind \mod A$ is non-projective by \Cref{lemma:sturdy}(\ref{item:projpres}).
   \end{proof}

   \begin{corollary} \label{corollary:inherittausummand}
      If $A$ has a \tm and $e \in A$ is a central idempotent then $\Gamma_e$ has a \tm.
   \end{corollary}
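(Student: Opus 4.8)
The plan is to reduce the statement to \cref{corollary:inherittau} by observing that a central idempotent automatically satisfies the projectivity hypothesis appearing there. The only computation needed is the identity $(1-e)Ae = 0$: for every $a \in A$ we have $ae = ea$ because $e$ is central, hence $(1-e)ae = (1-e)ea = ((1-e)e)a = 0$ (using $(1-e)e = e - e^2 = 0$), and letting $a$ range over $A$ gives $(1-e)Ae = 0$.

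Once this is in hand the rest is immediate. The zero module is projective over $\Gamma_e^{\op}$, so $(1-e)Ae \in \mod \Gamma_e^{\op}$ is projective, and \cref{corollary:inherittau} applies verbatim: from a \tm $\Phi_A$ for $A$ it produces the \tm $\Phi_{\Gamma_e} = \pi \Phi_A \iota$ for $\Gamma_e$, where $\pi = \sK_0(F_e)$ and $\iota = \sK_0(G_e)$.

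There is essentially no obstacle here; the entire content is the elementary identity $(1-e)Ae = 0$, after which the corollary is a special case of \cref{corollary:inherittau}. If one prefers a self-contained argument, centrality of $e$ yields an algebra decomposition $A \cong \Gamma_e \times \Gamma_{1-e}$ as in \cref{remark:extquiver1}, under which $\sK_0(\mod A)$ splits as $\sK_0(\mod \Gamma_e) \oplus \sK_0(\mod \Gamma_{1-e})$ with $F_e$ and $G_e$ becoming the projection onto and the inclusion of the first factor; but this merely re-proves the relevant instance of \cref{proposition:artranslate}, so invoking \cref{corollary:inherittau} is the cleanest route.
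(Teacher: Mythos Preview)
Your proof is correct and follows exactly the same route as the paper: show $(1-e)Ae = 0$ using centrality of $e$, observe that the zero module is projective, and invoke \cref{corollary:inherittau}. Your additional remarks about the explicit form $\Phi_{\Gamma_e} = \pi \Phi_A \iota$ and the alternative direct-product argument are accurate but unnecessary elaboration on what is already the paper's argument.
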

   \begin{proof}
      We have $(1-e)Ae = 0$ since $e$ is a central idempotent, so $(1-e)Ae \in \mod \Gamma_e^{\op}$ is projective.
      The result follows from \Cref{corollary:inherittau}.
   \end{proof}

   \begin{remark} \label{remark:inherittausimple}
      If $\sD(Ae)$ is simple then the requirements of \Cref{corollary:inherittau} are satisfied. 
      Indeed, the $A^\op$-module $Ae$ is not a direct summand of $A(1-e)$, as $A^\op$ is basic.
      But, $Ae$ is a simple and projective $A^{\op}$-module.
      Hence, $0 = \Hom_{A^\op}(A(1-e),Ae) \cong (1-e)Ae$.
   \end{remark}
   
   \noindent If $A$ has simple injective modules, then \Cref{remark:inherittausimple} and \Cref{corollary:inherittau} allow us to study a \tm on $A$ by studying a \tm on an idempotent subalgebra.
   The following \namecref{lemma:invtaumap} and \namecref{remark:artranslateduality} are useful as they allow us the same reduction, if $A$ has no injective simple modules but at least one projective simple module.

   \begin{lemma} \label{lemma:invtaumap}
      Suppose $A$ has a \tm $\Phi$. 
      If $A$ has no simple injective modules then $\Phi$ has an inverse $\Phi'$.
      Furthermore, $\Phi'$ is a \tim for $A$.
   \end{lemma}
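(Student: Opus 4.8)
The plan is to show first that $\Phi$ is surjective, then deduce that it is bijective from the finiteness of the rank of $\sK_0(\mod A)$, and finally verify directly that $\Phi^{-1}$ has the defining property of a \tim.

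The starting point is that $\sK_0(\mod A)$ is a free abelian group with basis $[S_1], \dots, [S_n]$, where $S_1, \dots, S_n$ are the simple $A$-modules. The key observation is that every basis vector $[S_i]$ lies in $\im \Phi$. Indeed, since $\mod A$ has no simple injective modules, each $S_i$ is non-injective, so $\tau_A^{-1}(S_i)$ is a well-defined non-projective indecomposable $A$-module with $\tau_A \tau_A^{-1}(S_i) \cong S_i$ by standard Auslander--Reiten theory. As $\Phi$ is a \tm and $\tau_A^{-1}(S_i) \in \ind A$ is non-projective, this gives $\Phi[\tau_A^{-1}(S_i)] = [\tau_A \tau_A^{-1}(S_i)] = [S_i]$. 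Since the classes $[S_i]$ generate $\sK_0(\mod A)$, the endomorphism $\Phi$ is surjective.

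Next I would invoke the elementary fact that a surjective $\ZZ$-linear endomorphism of a free abelian group of finite rank is an isomorphism: in any basis its matrix admits a right inverse over $\ZZ$, hence has determinant $\pm 1$. Thus $\Phi' \coloneqq \Phi^{-1} \in \End_\ZZ(\sK_0(\mod A))$ exists. To see that $\Phi'$ is a \tim, I would take an arbitrary non-injective $M \in \ind A$, note once more that $\tau_A^{-1}(M) \in \ind A$ is non-projective with $\Phi[\tau_A^{-1}(M)] = [\tau_A \tau_A^{-1}(M)] = [M]$, and conclude $\Phi'[M] = \Phi^{-1}[M] = [\tau_A^{-1}(M)]$, which is exactly what is required.

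I do not expect a genuine obstacle here. The two points that require attention are that the hypothesis "no simple injective modules" is precisely what forces every basis vector into $\im\Phi$ — without it one only obtains the sublattice of $\sK_0(\mod A)$ spanned by classes of non-injective indecomposables, which may be proper — and that the passage from surjectivity to bijectivity genuinely uses the finite rank of $\sK_0(\mod A)$ and would fail over a general commutative ring.
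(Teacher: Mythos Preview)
Your proof is correct and follows essentially the same approach as the paper: both hinge on the computation $\Phi[\tau_A^{-1}(S_i)] = [S_i]$ (using that no simple is injective) to see that $\Phi$ has a right inverse, then invoke finite rank to upgrade this to a two-sided inverse, and finally verify the \tim property by applying $\Phi^{-1}$ to $\Phi[\tau_A^{-1}M] = [M]$. The only cosmetic difference is that the paper first \emph{defines} $\Phi'$ on simples by $\Phi'[S] = [\tau_A^{-1}S]$ and then checks $\Phi\Phi' = \Id$, whereas you phrase it as surjectivity of $\Phi$; the content is identical.
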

   \begin{proof}
      Let $\Phi$ be a \tm for $A$.
      Since $\mod A$ has no injective simple modules and since $\sK_0(\mod A)$ is free on the simple $A$-modules, we can define a $\ZZ$-linear morphism $\Phi' \colon \sK_0(\mod A) \to \sK_0(\mod A)$ through $\Phi'[S] = [\tau_A^{-1} S]$ for $S \in \mod A$ simple.
      Further, since $\tau_A^{-1} S$ is not projective and indecomposable for $S \in \mod A$ simple and as $\Phi$ is a \tm, we have $\Phi \Phi'([S]) = [S]$ for $S \in \mod A$ simple.
      This means that $\Phi'$ is a right inverse for $\Phi$, since $\sK_0(\mod A)$ is the free abelian group on the simple $A$-modules. 
      Notice that $\End_{\ZZ}(\sK_0(\mod A))$ is a subring of a matrix ring over $\QQ$ and as a result $\Phi$ and $\Phi'$ are indeed inverse to each other by \cite[Proposition 2.2.1]{serre_matrices_2002}.

      We claim that $\Phi'[M] = [\tau_A^{-1} M]$ holds for any $M \in \ind \mod A$ which is non-injective.
      Indeed if $M \in \ind \mod A$ is non-injective, then $\smash{\tau_A^{-1} M}$ is not projective and indecomposable.
      Therefore, we have $\Phi[\smash{\tau_A^{-1}M}] = [\smash{\tau_A \tau_A^{-1} M}] = [M]$, using that $\Phi$ is a \tm.
      Finally, applying $\Phi'$ to both sides, we obtain that $[\tau_A^{-1} M] = \Phi' [M]$ holds.
   \end{proof} 

   \begin{remark} \label{remark:artranslateduality}
      Notice, we have $\sD \tau_{A^{\op}} \sD(M) \cong \tau_A^{-1}M$ for any $M \in \mod A$. 
      Since the $\kk$-dualities are exact, this implies that $A^{\op}$ has a \tm $\Phi^{\op}$ if and only if $A$ has \tim $\Phi' \coloneqq \sK_0(\sD) \Phi^\op \sK_0(\sD)$.
      Hence, \Cref{lemma:invtaumap} shows that if $A$ has a \tm and no injective simple modules, then $A^{\op}$ has a \tm as well.
   \end{remark}
   
   \noindent Finally, we want to investigate how the Auslander--Reiten translation behaves on modules with a simple socle and a first cosyzygy with decomposable socle.

   \begin{lemma} \label{lemma:badmodule} 
      Suppose $A$ has a $\tau$-map $\Phi$.
      If $M \in \ind \mod A$ satisfies $\Phi [M] \neq [N]$ for every $N \in \mod A$ then any submodule of $M$ is projective. 
   \end{lemma}
   \begin{proof}
      It suffices to show the lemma for all indecomposable submodules $M_0 \subseteq M$.
      First, if $M_0 = M$ then $\Phi [M_0] \neq [\tau_A M_0]$ and hence $M_0$ is projective.

      If $M_0 \subsetneq M$ let $M/M_0 = M_1 \oplus \dots \oplus M_k$ be a decomposition with $M_1, \dots, M_k \in \ind \mod A$. 
      For $1 \leq i \leq k$ the modules $M_i$  cannot be projective as they are proper quotients of the indecomposable module $M$. 
      We have $[M]= [M_0] + \dots + [M_k]$ in $\sK_0(\mod A)$.
      If $M_0$ is also not projective then we have
      \begin{align*} 
         \Phi [M] &= \Phi [M_0] + \cdots + \Phi[M_k] = [\tau_A M_0] + \cdots + [\tau_A M_k] = [\tau_A M_0 \oplus \cdots \oplus \tau_A M_k]
      \end{align*}
      in $\sK_0(\mod A)$ as $\Phi$ is a \tm, showing the lemma by contraposition.
   \end{proof}

   \begin{lemma} \label{lemma:neighbourmodules}
      Let $0 \to M \to I_0 \xrightarrow{f} I_1$ be an augmented minimal injective copresentation of an $A$-module $M$.
      Suppose $I_0$ is indecomposable and $I_1 = I_1' \oplus I_1''$ is a non-trival decomposition yielding a decomposition $f = \smash{[f', f'']^{\trans}}$.
      Then there is an exact sequence 
      \begin{equation}
         0 \to M \to M' \to I_1'' \to \cok f \to \cok f' \to 0 \label{equation:5termsequence}
      \end{equation}
      and an exact sequence 
      \begin{equation}
         0 \to \nu_A^{-1} M \to \nu_A^{-1} M' \to \nu_A^{-1} I_1'' \to \tau_A^{-1} M \to \tau_A^{-1}M' \to 0 \label{equation:5termnakayamasequence}
      \end{equation}
      with $M' = \ker(f')$. 
      Further, $M$ and $M'$ are indecomposable non-injective with $M \not \cong M'$.
   \end{lemma}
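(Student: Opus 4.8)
The plan is to derive both displayed exact sequences from a single formal observation, after first checking that the ``first component'' $f'$ of $f$ is part of a minimal injective copresentation in its own right. So I would begin by recording the trivial inclusions $M = \ker f = \ker f' \cap \ker f'' \subseteq \ker f' = N \subseteq I_0$ and noting that $M$ and $N$ are nonzero, since otherwise $I_0 = 0$ would fail to be indecomposable. The key preliminary step is to show that $0 \to N \to I_0 \xrightarrow{f'} I_1'$ is again an augmented \emph{minimal} injective copresentation: since $M \subseteq N \subseteq I_0$ and $M \hookrightarrow I_0$ is essential, $N \hookrightarrow I_0$ is essential, so $I_0$ is the injective envelope of $N$; and $\im f' \cong I_0/N$ is essential in $I_1'$, because for $0 \neq U \subseteq I_1' \subseteq I_1$ the minimality of the given copresentation makes $\im f \hookrightarrow I_1$ essential, so $U$ meets $\im f$ in some nonzero $(f'(x),f''(x))$, and $U \subseteq I_1'$ then forces $f''(x) = 0$, giving $0 \neq f'(x) \in U \cap \im f'$; hence $I_1'$ is the injective envelope of $I_0/N$. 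I expect this to be the only non-routine step, and it is exactly what ensures that the cokernels appearing below compute the honest translates $\tau_A^{-1}(M)$ and $\tau_A^{-1}(N)$ rather than these modules only up to projective summands.

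The formal observation is that for any morphism $g = (g',g'')^{\trans}\colon X_0 \to X_1' \oplus X_1''$ there is a short exact sequence of two-term complexes, placed in cohomological degrees $0$ and $1$,
\[
   0 \to (0 \to X_1'') \to (X_0 \xrightarrow{g} X_1' \oplus X_1'') \to (X_0 \xrightarrow{g'} X_1') \to 0 \text{,}
\]
whose cohomology long exact sequence reads $0 \to \ker g \to \ker g' \to X_1'' \to \cok g \to \cok g' \to 0$. Taking $g = f$ yields \eqref{equation:5termsequence} immediately, using $\ker f = M$ and $\ker f' = N$. Taking instead $g = \nu_A^{-1}(f) = (\nu_A^{-1}(f'),\nu_A^{-1}(f''))^{\trans}$, with $\nu_A^{-1}(I_1) = \nu_A^{-1}(I_1') \oplus \nu_A^{-1}(I_1'')$ by additivity, yields a five-term exact sequence whose terms I would identify as follows. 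Since $\nu_A^{-1} = \Hom_{A^{\op}}(\sD(-),A)$ is left exact, applying it to the exact sequences $0 \to M \to I_0 \to I_1$ and $0 \to N \to I_0 \to I_1'$ gives $\ker\nu_A^{-1}(f) \cong \nu_A^{-1}(M)$ and $\ker\nu_A^{-1}(f') \cong \nu_A^{-1}(N)$; and since $0 \to M \to I_0 \to I_1$ and $0 \to N \to I_0 \to I_1'$ are minimal injective copresentations (the latter by the first paragraph), the standard description of $\tau_A^{-1}$ as the cokernel of $\nu_A^{-1}$ applied to such a copresentation gives $\cok\nu_A^{-1}(f) \cong \tau_A^{-1}(M)$ and $\cok\nu_A^{-1}(f') \cong \tau_A^{-1}(N)$. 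This is \eqref{equation:5termnakayamasequence}.

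It remains to treat the last three claims. Both $M$ and $N$ are nonzero with the indecomposable injective envelope $I_0$, hence indecomposable. If $M$ were injective then $M = I_0$, forcing $f = 0$ and so $I_1 = 0$, contradicting $I_1' \neq 0$; symmetrically, $N$ injective would force $I_1' = 0$ via the minimal copresentation of $N$. Finally, suppose $I_1'' \neq 0$ and $M \cong N$; then $M \subseteq N$ with equal finite length gives $M = N$ as submodules of $I_0$, so $I_0/M = I_0/N$, whence the minimal injective copresentations of $M$ and $N$ coincide and $I_1 \cong I_1'$; but then $I_1' \cong I_1' \oplus I_1''$, and Krull--Schmidt forces $I_1'' = 0$, a contradiction. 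Apart from the minimality check in the first paragraph, the entire argument is bookkeeping with the snake lemma.
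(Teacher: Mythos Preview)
Your proof is correct and follows essentially the same approach as the paper: both derive the two five-term sequences via the snake lemma applied to the short exact sequence of two-term complexes $(0 \to I_1'') \hookrightarrow (I_0 \to I_1) \twoheadrightarrow (I_0 \to I_1')$ and its image under $\nu_A^{-1}$, and both hinge on verifying that $0 \to N \to I_0 \xrightarrow{f'} I_1'$ is again a minimal injective copresentation. Your presentation is slightly more streamlined in isolating the formal observation once and applying it twice, and your essentiality argument for $\im f' \hookrightarrow I_1'$ is more explicit than the paper's one-line ``because $f$ does so,'' but the underlying mathematics is identical.
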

   \begin{proof}
   We can draw the undashed arrows of the commutative diagram in \Cref{figure:5termsequence}
   \begin{figure}[ht]
      \includepdf{tikz/ar-snake1-injective}
      \caption{Exact sequences induced by the decomposition $I_1 = I_1' \oplus I_1''$.}
      \label{figure:5termsequence}
   \end{figure}
   where the row $0 \to I_1'' \to I_1 \to I_1' \to 0$ is the split short exact sequence defined through the direct sum decomposition $I_1 = I_1' \oplus I_1''$ and $M'$ is the kernel of $f'$.
   The snake lemma shows that the dashed sequence in \Cref{figure:5termsequence} is exact, which yields the desired sequence (\ref{equation:5termsequence}).

   That $f \colon I_0 \to I_1$ is a minimal injective presentation means that the inclusions $M \to I_0$ and $\im f \to I_1$ are essential extensions.
   Now, the inclusion $M' \to I_0$ is an essential extension as the composite $M \to M' \to I_0$ is an essential extension and the inclusion $\im f' \to I_1'$ is an essential extension as the composite $\im f \to \im f' \oplus \im f'' \to I_1' \oplus I_1''$ is an essential extension.
   Hence, $f' \colon I_0 \to I_1'$ is minimal injective copresentation of $M'$.
   In particular, $M$ and $M'$ are indecomposable as $I_0$ is so and both $M$ and $M'$ are non-injective as they have non-trivial minimal injective copresentations.
   Additionally, $M \not \cong M'$ as $M$ and $M'$ have different minimal injective copresentations.

   We can apply the inverse Nakayama functor to the upper three rows of \Cref{figure:5termsequence} and obtain a commutative diagram in \Cref{figure:5termnakayamasequence}
   \begin{figure}[ht]
      \includepdf{tikz/ar-snake2-injective}
      \caption{Exact sequences induced by applying $\nu_A^{-1}$ to \Cref{figure:5termsequence}.}
      \label{figure:5termnakayamasequence}
   \end{figure}
   where the two rightmost columns are exact by the definition of $\tau_A^{-1}$ and the two middle rows are exact because functors preserve split exact sequences.
   The dashed sequence in \Cref{figure:5termnakayamasequence} is then exact by the snake lemma.
   This is the sequence (\ref{equation:5termnakayamasequence}).
   \end{proof}

   \begin{lemma} \label{lemma:fork2}
      Suppose that $A$ has a \tm $\Phi$.
      Let $0 \to M \to I_0 \xrightarrow{f} I_1$ be an augmented minimal injective copresentation of an $A$-module $M$.
      If $I_0$ is indecomposable and there is a non-trival decomposition $I_1 = I_1' \oplus I_1''$, where $I_1''$ is indecomposable, then any submodule of $\nu_A^{-1} I_1''$ is projective. 
   \end{lemma}
   \begin{proof}
      \Cref{lemma:neighbourmodules} applied to the decomposition yields a monomorphism $g \colon M \to M'$ coming from (\ref{equation:5termsequence}) and an exact sequence 
      \begin{equation} \label{equation:tauinvseq}
         \nu_{A}^{-1}I_1'' \to \tau_A^{{-1}}M \xrightarrow{\smash{g'}} \tau_A^{{-1}}M' \to 0
      \end{equation}
      coming from (\ref{equation:5termnakayamasequence}).
      Notice, $M$ and $M'$ are indecomposable, non-injective and not isomorphic by \Cref{lemma:neighbourmodules} and hence $\tau_A^{-1}M$ and $\tau_A^{-1}M'$ are indecomposable, non-projective and not isomorphic.
      In particular, $\cok g \neq 0$ and $\ker g' \neq 0$.

      We have $[M] = [M']- [\cok g]$ and $[\tau_A^{{-1}}M] = [\ker g'] + [\tau_A^{{-1}}M']$.
      Applying the $\tau$-map $\Phi$ to the latter equation yields $[M] = \Phi[\ker g'] + [M']$.
      Hence, $\Phi[\ker g'] = -[\cok g]$.
      If there was $N \in \mod A$ such that $\Phi[\ker g'] = [N]$ then $0 = [\cok g \oplus N]$, which is absurd.

      As $\nu_A^{-1}I_1''$ is an indecomposable projective module and $\ker g' \neq 0$, the sequence (\ref{equation:tauinvseq}) gives rise to a projective cover $\nu_A^{-1}I_1'' \to \ker g'$.
      But this implies that $\ker g'$ is indecomposable, as $\nu_A^{-1}I_1''$ is so.
      Hence, any submodule of $\ker g'$ is projective, by \Cref{lemma:badmodule}.
      But then $\ker g'$ is also projective and therefore isomorphic to its projective cover $\nu_A^{-1} I_1''$, showing the lemma as any submodule of $\ker g'$ is projective.
   \end{proof}

   \section{Main results}

   \subsection{Algebras with connected and non-acyclic \texorpdfstring{$\Ext^1$}{Ext1}-quiver admitting a \texorpdfstring{\tm}{τ-map}} \label{subsec:class}

   In this \namecref{subsec:class} we want to prove our main result, a classification of finite-dimensional $\kk$-algebras with connected and non-acyclic $\Ext^1$-quiver admitting a \tm.
   \begin{theorem} \label{theorem:conclusion}
      Suppose the $\Ext^1$-quiver of $A$ is connected and non-acyclic. 
      Then $A$ has a \tm if and only if $A$ is a cyclic Nakayama algebra.
   \end{theorem}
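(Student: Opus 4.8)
The ``if'' direction needs no hypothesis on the $\Ext^1$-quiver $Q$: every Nakayama algebra has a \tm by \cref{proposition:nakayamamap}. So the plan is to prove the forward implication. Assume $A$ has a \tm $\Phi$ and that $Q$ is connected and non-acyclic. By \cref{theorem:arsnakayama} it suffices to show that every module in the $\tau_A$-orbit of a simple $A$-module is again simple, and I would get this by an induction on the number $n$ of simple $A$-modules whose real content is that $Q$ can have no source and no sink vertex.

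\emph{The core case: $Q$ has no source and no sink vertex.} Then every simple $S_i$ is non-projective and non-injective by \cref{remark:extquiver2}, so in the basis $([S_i])_i$ of $\sK_0(\mod A)$ the $i$-th column of $\Phi$ is $[\tau_A S_i]=\sum_j[\tau_A S_i:S_j]\,[S_j]\in\NN^n\setminus\{0\}$, hence $\Phi\in\sM_n(\NN)$. By \cref{lemma:invtaumap} the inverse $\Phi'$ of $\Phi$ exists and is a \tim, and since no $S_i$ is injective its $i$-th column is $[\tau_A^{-1}S_i]\in\NN^n\setminus\{0\}$, so $\Phi^{-1}\in\sM_n(\NN)$ as well. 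Now \cref{lemma:Nmatrices} forces $\Phi$ to be a permutation matrix, so $[\tau_A S_i]=[S_{\sigma(i)}]$ for a permutation $\sigma$; as $\tau_A S_i$ is indecomposable with the class of a simple module, it is $S_{\sigma(i)}$, and \cref{theorem:arsnakayama} finishes this case.

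\emph{Ruling out source vertices.} Suppose $S$ is a source of $Q$, i.e.\ a simple injective module, and let $e$ be the idempotent with $\sD(Ae)\cong S$, so $(1-e)Ae=0$ by \cref{remark:inherittausimple}; as a source lies on no oriented cycle, $Q-S$ is still non-acyclic. If $Q-S$ is disconnected, I would single out the component $C_0$ carrying a cycle (connectedness of $Q$ forces an arrow $S\to C_0$), check via \cref{lemma:extquiver} that the idempotent $f$ cutting out the complement of $\{S\}\cup C_0$ satisfies $(1-f)Af=0$, and conclude from \cref{corollary:inherittau} and \cref{lemma:vertexremoval} that the idempotent subalgebra on $\{S\}\cup C_0$ has a \tm and connected, non-acyclic $\Ext^1$-quiver on fewer than $n$ vertices; by induction it is a cyclic Nakayama algebra, contradicting that its $\Ext^1$-quiver still has the source $S$. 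Hence $Q-S$ is connected; then $\Gamma_e$ has a \tm by \cref{corollary:inherittau}, its $\Ext^1$-quiver is the connected, non-acyclic $Q-S$ by \cref{lemma:vertexremoval}, so by induction $\Gamma_e$ is a cyclic Nakayama algebra and $Q-S$ is an oriented cycle. Finally I would contradict \emph{this} via \cref{lemma:fork2}: $Q$ has no sink (every cycle vertex and $S$ have out-degree $\geq 1$), so $A$ has no simple projective module; taking an out-neighbour $T$ of $S$, the vertex $T$ has at least two distinct in-neighbours, so $\soc(I(T)/T)$ is semisimple with at least two summands, one of which is $S$; thus in a minimal injective copresentation $0\to T\to I_0\to I_1$ of $T$ the term $I_0=I(T)$ is indecomposable while $I_1\neq 0$ is decomposable, so \cref{lemma:fork2} forces the summand $\nu_A^{-1}(I(S))=\nu_A^{-1}(S)=P(S)$ of $\nu_A^{-1}(I_1)$ to contain a simple projective composition factor --- impossible. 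So $Q$ has no source. Applying the same statement to $A^{\op}$, which has a \tm by \cref{lemma:invtaumap} and \cref{remark:artranslateduality} (as $A$ has no simple injective) and whose $\Ext^1$-quiver is the connected, non-acyclic $Q^{\op}$, shows $Q$ has no sink either, and we are back in the core case.

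The step I expect to be the main obstacle is exactly this elimination of source and sink vertices: one has to pick, over an algebra without simple projective modules, the right module whose minimal injective copresentation makes \cref{lemma:fork2} bite, and one has to keep the inductive hypothesis applicable while deleting vertices, which forces the fussy case analysis around disconnections (controlled by \cref{corollary:inherittau}, \cref{lemma:vertexremoval}, \cref{remark:extquiver1}, \cref{corollary:inherittausummand}) and the symmetrisation via $A^{\op}$ (through \cref{lemma:invtaumap} and \cref{remark:artranslateduality}). After that the argument is just the short linear-algebra fact \cref{lemma:Nmatrices}.
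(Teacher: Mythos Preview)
Your overall architecture matches the paper's: the ``if'' via \cref{proposition:nakayamamap}, the core case via \cref{lemma:Nmatrices} and \cref{theorem:arsnakayama} (this is exactly \cref{lemma:noiniter=>nakayama}), the reduction via deletion of a simple injective, and the contradiction through \cref{lemma:fork2}. The connected sub-case of your source-elimination step is correct and is essentially the paper's \cref{lemma:case1} with $Q''=\varnothing$.

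The gap is in the disconnected sub-case. Your claim that the idempotent $f$ supported on $Q''$ satisfies $(1-f)Af=0$ is false. Connectedness of $Q$ forces not only an arrow $S\to C_0$ but also an arrow $S\to Q''$; hence some simple in $Q''$ occurs as a composition factor of the projective cover $P_S$, so $e_S A f\neq 0$ and therefore $(1-f)Af\neq 0$. What \cref{lemma:extquiver} does give you is the opposite vanishing $fA(1-f)=0$ (paths out of $Q''$ stay in $Q''$), but that is not the hypothesis of \cref{corollary:inherittau} or \cref{lemma:vertexremoval}. So you cannot transport the \tm to the idempotent subalgebra on $\{S\}\cup C_0$, nor identify its $\Ext^1$-quiver, and the intended contradiction (``cyclic Nakayama with a source'') does not go through.

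The repair is exactly what the paper does, and it also lets you merge your two sub-cases. Delete $S$ first (this is legitimate by \cref{remark:inherittausimple}), so that $Q-S$ becomes the $\Ext^1$-quiver of $\Gamma_e$; then split off the cycle-carrying component $Z$ via the \emph{central} idempotent (\cref{remark:extquiver1}, \cref{corollary:inherittausummand}), apply the induction hypothesis to $\Gamma_Z$ to get $Z\cong C_m$, and finally run \cref{lemma:fork2} in $A$. The last point is where your choice of summand matters: applying \cref{lemma:fork2} to the summand $\nu_A^{-1}(I(S))=P_S$ fails in general, because $P_S$ may have composition factors in $Q''$ and $Q''$ can contain sinks. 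The paper instead takes the cycle predecessor $S''\in Z$ of your vertex $T$ and uses $\nu_A^{-1}(I(S''))=P_{S''}$, whose composition factors all lie in the cycle $Z$ by \cref{lemma:extquiver} and are therefore never simple projective; this gives the contradiction uniformly, whether or not $Q-S$ is connected (see \cref{lemma:case1}).
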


   \noindent The proof of \Cref{theorem:conclusion} is split up.
   In \Cref{theorem:mainthm} we show that all algebras with a connected and non-acyclic $\Ext^1$-quiver admitting a \tm are cyclic Nakayama algebras and in \Cref{proposition:nakayamamap} we show that all Nakayama algebras admit a \tm.
   Using \Cref{remark:quivertoextquiver} we can reformulate this as the following result for quotients of path algebras.

   \begin{corollary} \label{corollary:conclusion}
      Let $\kk$ be an algebraically closed field.
      Suppose $A \coloneqq \kk Q / I$, where $Q$ is a connected and non-acyclic quiver and $I \lhd \kk Q$ is an admissible ideal. 
      Then $A$ has a \tm if and only if $A$ is a cyclic Nakayama algebra.
   \end{corollary}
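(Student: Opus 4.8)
The plan is to obtain this as a formal consequence of \cref{theorem:conclusion}, combined with \cref{remark:quivertoextquiver} and the standard classification of connected Nakayama algebras by their quivers, so that essentially no new work is needed.

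First I would translate the hypothesis on $Q$ into a hypothesis on the $\Ext^1$-quiver. Since $\kk$ is algebraically closed and $A = \kk Q/I$ with $I$ an admissible ideal, \cref{remark:quivertoextquiver} identifies the $\Ext^1$-quiver $Q_e$ of $A$ with the quiver obtained from $Q$ by relabelling vertices by simple modules and identifying parallel arrows, and it records that $Q_e$ is connected and non-acyclic precisely because $Q$ is. Hence $A$ satisfies the hypotheses of \cref{theorem:conclusion}, which yields that $A$ has a \tm if and only if $A$ is a Nakayama algebra.

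Next I would identify, among algebras with connected non-acyclic quiver $Q$, the Nakayama algebras with the cyclic Nakayama algebras. Recall that a basic connected Nakayama algebra over an algebraically closed field has underlying quiver either a linearly oriented $A_n$, which is acyclic, or an oriented cycle $C_n$ for some $n \geq 1$ (with $C_1$ the one-loop quiver), and the latter are by definition the cyclic Nakayama algebras. Since $Q$ is assumed connected and to contain an oriented cycle, only the second case can occur, so a Nakayama algebra with such a quiver is a cyclic Nakayama algebra; conversely a cyclic Nakayama algebra has quiver $C_n$, which is connected and non-acyclic, so it falls under the standing hypotheses. Putting the two paragraphs together gives the chain ``$A$ has a \tm $\iff$ $A$ is a Nakayama algebra $\iff$ $A$ is a cyclic Nakayama algebra'' under the assumptions of the corollary, which is exactly the assertion.

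I do not expect a genuine obstacle here: the only point requiring a little care is to invoke the quiver classification of connected Nakayama algebras correctly and to observe that non-acyclicity is precisely the condition ruling out the linearly oriented type, so that the Nakayama case collapses to the cyclic one; everything else is immediate from \cref{theorem:conclusion} and \cref{remark:quivertoextquiver}.
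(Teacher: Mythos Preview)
Your proposal is correct and matches the paper's approach exactly: the paper presents this corollary as an immediate reformulation of \cref{theorem:conclusion} via \cref{remark:quivertoextquiver}, and the extra step you spell out (that a connected Nakayama algebra with non-acyclic quiver must be cyclic) is precisely the standard classification fact needed to finish. No additional argument is required.
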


   \noindent The key to \Cref{theorem:conclusion} lies in \Cref{lemma:noiniter=>nakayama} and \Cref{lemma:case1}.

   \begin{lemma} \label{lemma:noiniter=>nakayama}
      Suppose $A$ admits a \tm $\Phi$.
      If the $\Ext^1$-quiver $Q$ of $A$ has no source and no sink vertices then $A$ is a cyclic Nakayama algebra.
   \end{lemma}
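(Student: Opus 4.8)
The plan is to prove that, under the stated hypotheses, $\tau_A$ permutes the isomorphism classes of simple $A$-modules. Once this is established, every module in a $\tau_A$-orbit of a simple module is again simple, so \cref{theorem:arsnakayama} immediately yields that $A$ is a Nakayama algebra (alternatively, once $\tau_A$ is known to permute all simple modules, \cref{corollary:permutesummand} with $e=0$ gives the same conclusion).

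First I would reformulate the hypothesis. By \cref{remark:extquiver2}, the sink vertices of $Q$ are exactly the simple projective $A$-modules and the source vertices are exactly the simple injective $A$-modules, so the assumption that $Q$ has no sink and no source vertices says precisely that $\mod A$ has neither simple projective nor simple injective modules. Write $S_1, \dots, S_n$ for the simple $A$-modules, so that $[S_1], \dots, [S_n]$ is a $\ZZ$-basis of $\sK_0(\mod A)$ and $\Phi$ is given by a matrix $M \in \sM_n(\ZZ)$ with respect to this basis. Since no $S_i$ is projective, the $i$-th column of $M$ is $\Phi[S_i] = [\tau_A S_i] = \sum_{j=1}^n [\tau_A S_i : S_j]\,[S_j]$, whose entries $[\tau_A S_i : S_j]$ are non-negative; hence $M \in \sM_n(\NN)$. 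Because $\mod A$ has no simple injective modules, \cref{lemma:invtaumap} shows $\Phi$ is invertible with inverse $\Phi'$ a \tim for $A$; and since $\tau_A^{-1} S_i \neq 0$ (as $S_i$ is non-injective), the $i$-th column of the matrix $M'$ of $\Phi'$ is $\Phi'[S_i] = [\tau_A^{-1} S_i]$, again with non-negative entries, so $M' \in \sM_n(\NN)$ as well. As $M M' = 1_{n\times n}$, \cref{lemma:Nmatrices} forces both $M$ and $M'$ to be permutation matrices.

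It then remains to lift this conclusion from the Grothendieck group to $\mod A$. The permutation matrix $M$ provides a permutation $\sigma$ of $\{1,\dots,n\}$ with $[\tau_A S_i] = [S_{\sigma(i)}]$ for all $i$. Since $S_i$ is indecomposable and non-projective, $\tau_A S_i$ is an indecomposable module; as its class in $\sK_0(\mod A)$ equals $[S_{\sigma(i)}]$, its composition series is a single copy of $S_{\sigma(i)}$, so it has length one and therefore $\tau_A S_i \cong S_{\sigma(i)}$. Thus $\tau_A$ sends each simple module to a simple module, and (since $\sigma$ is a bijection and no simple module is projective) the same holds for $\tau_A^{-1}$; because no simple module is projective or injective, iterating $\tau_A$ and $\tau_A^{-1}$ keeps us inside the class of simple modules, so the entire $\tau_A$-orbit of each simple $A$-module consists of simple modules, and \cref{theorem:arsnakayama} finishes the argument. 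The proof is largely formal; the one place needing a small observation is the step passing from the equality of classes $[\tau_A S_i] = [S_{\sigma(i)}]$ in $\sK_0(\mod A)$ to the isomorphism $\tau_A S_i \cong S_{\sigma(i)}$, which relies on the fact that an indecomposable module of composition length one is simple. I do not anticipate any genuine obstacle beyond this.
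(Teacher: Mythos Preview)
Your proof is correct and follows essentially the same approach as the paper: translate the absence of sources and sinks into the absence of simple injectives and projectives via \cref{remark:extquiver2}, use \cref{lemma:invtaumap} to get the inverse \tim, observe both matrices have non-negative entries, apply \cref{lemma:Nmatrices}, and conclude with \cref{theorem:arsnakayama}. Your write-up is in fact slightly more careful than the paper's in spelling out why $[\tau_A S_i] = [S_{\sigma(i)}]$ forces $\tau_A S_i \cong S_{\sigma(i)}$.
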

   \begin{proof}
      Since there are no source and sink vertices in $Q$ there are no simple injective and no simple projective modules in $\mod A$, by \Cref{remark:extquiver2}.
      Then $A$ has a \tim $\Phi'$ which is the inverse of $\Phi$ by \Cref{lemma:invtaumap}.
      Suppose $S_1, \dots, S_n$ are the simple $A$-modules.
      Fix the basis $\mathscr{B}$ of $\sK_0(\mod A)$ given by $[S_1], \dots, [S_n]$.
      Then the matrix representation of $\Phi$ with respect to $\mathscr{B}$ has the dimension vectors of $\tau_A S_1, \dots, \tau_A S_n$ as columns and the matrix representation of $\Phi'$ with respect $\mathscr{B}$ has the dimension vectors of $\tau_A^{-1} S_1, \dots, \tau_A^{-1} S_n$ as columns.
      But then, by \Cref{lemma:Nmatrices}, the maps $\Phi$ and $\Phi'$ permute the elements of $\mathscr{B}$.
      Hence, $\tau_A$ permutes the simple $A$-modules and $A$ is a cyclic Nakayama algebra by \Cref{theorem:arsnakayama}.
   \end{proof}

   \begin{lemma} \label{lemma:case1}
      Suppose that
      \begin{enumerate}
         \item the $\Ext^1$-quiver $Q_A$ of $A$ is connected,
         \item there is a source vertex $S' \coloneqq \sD(Ae)$ in $Q_A$ for some idempotent $e \in A$ and
         \item the $\Ext^1$-quiver $Q_{\Gamma_e}$ of $\Gamma_e$ has a component isomorphic to $C_m$ for some $m \in \NN$. 
      \end{enumerate}
      Then $A$ does not admit a \tm.
   \end{lemma}
   \begin{proof}
      Suppose to the contrary that $A$ admits a \tm $\Phi$.
      We may identify $C_m$ with a component of $Q_{\Gamma_e}$ and, by \Cref{lemma:vertexremoval}, we can identify $Q_{\Gamma_e}$ with the full subquiver of $Q_A$ containing all vertices except $S'$.
      As $Q_A$ is connected and as $C_m$ is a component of $Q_{\Gamma_e}$ there must be an arrow $S' \to S$ in $Q_A$ ending at some $S \in C_m$.
      There is a vertex $S'' \in C_m$ and an arrow $S'' \to S$ in $C_m \subseteq Q_A$.

      We have $\Ext_A^1(S', S) \neq 0$ and $\Ext_A^1(S'', S) \neq 0$.
      Let $0 \to S \to I_0 \to I_1$ be an augmented minimal injective copresentation of $S$.
      We have $\Hom_A(S', I_1) \neq 0$ and $\Hom_A(S'', I_1) \neq 0$, by \cite[Lemma 1]{iwanaga} or the dual of \Cref{remark:extwithsimple}.
      Since $S'$ and $S''$ are simple, this implies that $S'$ and $S''$ are submodules of $I_1$.
      Since $I_1$ is injective, there is a non-trivial decomposition $I_1 = I_1' \oplus I_1''$, where $I_1'$ has the injective envelope of $S'$ as a direct summand and $I_1''$ is the injective envelope of $S''$. 
      Notice, $\nu_A^{-1} I_1''$ is indecomposable and hence $\soc(\nu_A^{-1}I_1'')$ is projective by \Cref{lemma:fork2}. 

      But, $\nu_A^{-1}I_1''$ is the projective cover of $S''$. 
      As there is no path in $Q_A$ from $S''$ to any simple $A$-module not in $C_m$, we obtain that all composition factors of $\nu_A^{-1}I_1''$ lie in $C_m$, by \Cref{lemma:extquiver}.
      Hence, all composition factors of $\nu_A^{-1}I_1''$ are non-projective, by \Cref{remark:extquiver2}.
      This includes all the direct summands of $\soc (\nu_A^{-1} I_1'')$ is a contradiction.
   \end{proof}

   \begin{remark} \label{remark:orientednakayama}
      Suppose the $\Ext^1$-quiver $Q$ of a Nakayama algebra $A$ has an oriented cycle and is connected.
      For a fixed simple $A$-module $S$ there exists at most one other simple $A$-module $S'$ with $\Ext_A^1(S,S') \neq 0$, otherwise the projective cover of $S$ would not be uniserial.
      Dually, there exists at most one other simple $A$-module $S''$ with $\Ext_A^1(S'',S) \neq 0$. 
      Hence, there starts and ends at most one arrow in each vertex of $Q$.
      Because $Q$ is connected it must already be isomorphic to its oriented cycle.
   \end{remark}

   \begin{theorem} \label{theorem:mainthm}
      Suppose $A$ admits a \tm.
      If the $\Ext^1$-quiver $Q_A$ of $A$ is connected and has a subquiver isomorphic to $C_m$ for some $m \geq 1$ then $A$ is a cyclic Nakayama algebra.
   \end{theorem}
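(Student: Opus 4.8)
The plan is to prove the \namecref{theorem:mainthm} by induction on the number $N$ of vertices of the $\Ext^1$-quiver $Q$. The case split driving the argument is: if $Q$ has no source and no sink vertex, then $A$ is Nakayama at once by \cref{lemma:noiniter=>nakayama}; and if $Q$ has a source or a sink vertex, we derive a contradiction with the existence of the \tm. Since a one-vertex quiver containing a copy of $C_n$ is forced to be $C_1$ (so $n=1$), which has no source or sink vertex, the base case $N=1$ falls into the first alternative, and the whole theorem follows once the second alternative is shown impossible. Moreover, the sink case reduces to the source case: if $Q$ has a sink but no source vertex, then there are no simple injective $A$-modules by \cref{remark:extquiver2}, so $A^{\op}$ has a \tm by \cref{remark:artranslateduality}, its $\Ext^1$-quiver $Q^{\op}$ is connected, has $N$ vertices, contains a copy of $C_n$ (the reverse of an oriented cycle is an oriented cycle), and now has a source vertex, while $A$ is Nakayama iff $A^{\op}$ is. So I assume henceforth that $Q$ has a source vertex $S_e$, corresponding to an idempotent $e$.

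Being a source vertex, $S_e$ is injective by \cref{remark:extquiver2}, so $\sD(Ae)$ is simple; by \cref{remark:inherittausimple} together with \cref{corollary:inherittau}, the idempotent subalgebra $\Gamma_e$ inherits a \tm, and by \cref{lemma:vertexremoval} its $\Ext^1$-quiver is the full subquiver $Q' \coloneqq Q \setminus \{S_e\}$. Every vertex of a subquiver isomorphic to $C_n$ has an incoming arrow, so $S_e$ is not such a vertex; hence $Q'$ still contains a copy of $C_n$, lying inside some connected component $Q'_0$ of $Q'$. Decomposing $Q' = Q'_0 \sqcup (\text{rest})$ gives, via \cref{remark:extquiver1}, a product decomposition $\Gamma_e \cong B_0 \times B'$ in which $B_0$ has $\Ext^1$-quiver $Q'_0$; since the corresponding idempotents are central, $B_0$ inherits a \tm by \cref{corollary:inherittausummand}. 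Thus $B_0$ has a \tm, its $\Ext^1$-quiver $Q'_0$ is connected, contains a copy of $C_n$, and has fewer than $N$ vertices.

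By the inductive hypothesis $B_0$ is a Nakayama algebra, and as $Q'_0$ is connected and contains an oriented cycle, \cref{remark:orientednakayama} forces $Q'_0 \cong C_m$ for some $m \geq 1$; a subquiver of $C_m$ isomorphic to an oriented cycle must be all of $C_m$, so $m = n$, i.e.\ $Q'_0 \cong C_n$. Therefore $Q' \cong C_n \sqcup Q''$ for some (possibly empty) quiver $Q''$, so $Q$ is connected, has the source vertex $S_e$, and deleting $S_e$ yields $C_n \sqcup Q''$ --- exactly the configuration of \cref{lemma:case1}, which then gives that $A$ has no \tm, contradicting our hypothesis. Hence $Q$ has neither a source nor a sink vertex, and \cref{lemma:noiniter=>nakayama} finishes the induction. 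The step I expect to be the main obstacle is the one in the second paragraph: it is crucial that deleting the source vertex keeps the $C_n$-subquiver intact and, up to the product decomposition, isolates it as a whole connected component, so that the inductive hypothesis and \cref{remark:orientednakayama} can be applied to shrink it down to exactly $C_n$ and thereby trigger \cref{lemma:case1}.
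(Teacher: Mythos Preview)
Your proof is correct and follows essentially the same strategy as the paper's own argument: the paper phrases it as a minimal counterexample instead of an explicit induction on the number of simples, but the substance (reduce to a source vertex via \cref{remark:artranslateduality}, pass to $\Gamma_e$ via \cref{corollary:inherittau}, isolate the cycle-containing component, apply minimality/induction together with \cref{remark:orientednakayama}, and finish with \cref{lemma:case1}) is identical. Your extra observation that $m=n$ is harmless but unnecessary, since \cref{lemma:case1} only requires the deleted component to be some oriented cycle $C_m$.
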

   \begin{proof}
      We show the theorem by induction on the number $n$ of vertices of $Q_A$.
      For $n = 1$ the $\Ext^1$-quiver $Q_A$ of $A$ can only contain a cycle if $Q_A \cong C_1$.
      Hence, $A$ is a cyclic Nakayama algebra by \Cref{lemma:noiniter=>nakayama}.
      Now, assume that $Q_A$ has $n$ vertices and the theorem holds for all algebras with less than $n$ vertices in its $\Ext^1$-quiver. 
      
      We show that $A$ cannot have any injective simple module. 
      Suppose $A$ has an injective simple module $S' = \sD(Ae)$.
      Then we have $(1-e)Ae = 0$ by \Cref{remark:inherittausimple} and $\Gamma_e$ has a \tm by \Cref{corollary:inherittau}.
      Let $Q_{\Gamma_e}$ be the $\Ext^1$-quiver of $\Gamma_e$. 
      Then $Q_{\Gamma_e}$ is obtained from $Q_A$ by removing the source vertex $S'$ and the arrows incident to it, by \Cref{lemma:vertexremoval}.
      Notice, $Q_{\Gamma_e}$ contains an oriented cycle as $Q_A$ did so.
      Hence, there is a component $C$ of $Q_{\Gamma_e}$ which contains this oriented cycle.
      This component induces an idempotent subalgebra $\Gamma_{C}$ corresponding to a central idempotent, by \Cref{remark:extquiver1}.
      Then $\Gamma_{C}$ admits a \tm, by \Cref{corollary:inherittausummand}.
      Furthermore, $C$ has fewer than $n$ vertices. 
      Hence, $\Gamma_{C}$ must be a Nakayama algebra by induction hypothesis.
      Therefore, $C$ is a component of $Q_{\Gamma_e}$ which is isomorphic to an oriented cycle $C_m$ for some $m \geq 1$, by \Cref{remark:orientednakayama}.
      Applying \Cref{lemma:case1} yields that $A$ does not admit a \tm.
      Therefore, this case cannot occur and $A$ cannot have any injective simple modules.
      
      If $A$ has no injective simple modules but projective simple modules, then $A^{\op}$ has a \tm by \Cref{remark:artranslateduality} and, by $\kk$-duality, the $\Ext^1$-quiver of $A^\op$ is $Q_A^\op$, which has $n$ vertices and an oriented cycle. 
      But $A^\op$ then has injective simple modules, which cannot happen by the above.

      Therefore, $A$ cannot have any injective or projective simple modules.
      But then $A$ is a cyclic Nakayama algebra by \Cref{lemma:noiniter=>nakayama}.
   \end{proof}

   \noindent Finally, we show that every Nakayama algebra admits a \tm.

   \begin{proposition} \label{proposition:nakayamamap}
      Let $A$ be a Nakayama algebra and fix $x_S \in \sK_0(\mod A)$ for each simple projective module $S \in \mod A$.
      Then the unique morphism $\Phi \in \End_\ZZ(\sK_0(\mod A) )$ with
      \begin{equation}\label{eq:taumap}
         \Phi [S] = \begin{cases} 
                        [\tau_A S] & \text{for $S \in \mod A$ simple and non-projective, } \\
                        x_S      & \text{for $S \in \mod A$ simple and projective,} 
                    \end{cases}
      \end{equation}
      is a \tm for $A$. 
      If $A$ has no projective simple modules then $A$ has a unique \tm.
   \end{proposition}
   \begin{proof}
      Every module $M \in \ind \mod A$ is uniserial by \cite[Theorem VI.2.1]{auslander_representation_1995} and hence of the form $P / \rad^l P$ for $P \in \ind \proj A$ and $l \in \NN$ the Loewy length of $M$.
      Let $\Phi$ be the unique $\ZZ$-linear endomorphism of $\sK_0(\mod A)$ defined through (\ref{eq:taumap}).
      We claim by induction on $l \in \NN$ that $[\tau_A M] = \Phi [M]$ for all non-projective $M \in \ind \mod A$ of Loewy length $l$.
      
      For $l = 1$ this follows from the definition of $\Phi$, as an indecomposable module of Loewy length $1$ is simple.
      For the induction step suppose $l \geq 2$.
      We have a short exact sequence
      \begin{equation} \label{equation:arsequence}
         0 \to \rad M \to M \to M / \rad M \to 0.
      \end{equation}
      Write $M \cong P/\rad^l P$ for some $P \in \ind \proj A$.
      Notice, $\smash{\rad^l} P \neq 0$ because $M \cong P /\smash{\rad^l P}$ is non-projective by assumption.
      Therefore, $\rad M \cong \rad P / \smash{\rad^l} P$ is non-projective and uniserial as a non-trivial quotient of the uniserial module $\rad P$.
      Similarly, $M/\rad M$ is non-projective and simple.

      We can now calculate the Auslander--Reiten translates of the modules in (\ref{equation:arsequence}). 
      Because $\rad P / \rad^{l+1} P$ is uniserial of Loewy length $l$ we have $M' \coloneqq \rad P/ \rad^{l+1} P \cong P' / \smash{\rad^{l}} P'$ for some $P' \in \ind \proj A$.
      Applying \cite[Proposition IV.2.6(c)]{auslander_representation_1995} and the third isomorphism theorem for modules yields
      \begin{align*}
         \tau_A(M / \rad M) &\cong \tau_A(P/\rad P) \cong \rad P / \rad^2 P \cong M' /\rad M' \text{,} \\ 
         \tau_A M &\cong \tau_A( P /\rad^l P) \cong \rad P / \rad^{l+1} P \cong M' \text{ and} \\ 
         \tau_A(\rad M) &\cong \tau_A(M'/ \rad^{l-1} M') \cong \tau_A(P'/ \rad^{l-1} P') \cong \rad P' / \rad^{l} P' \cong \rad M'.
      \end{align*}
      But those Auslander--Reiten translates also fit into a short exact sequence
      \[ 0 \to \rad M' \to M' \to M'/ \rad M' \to 0 \]
      which shows
      \begin{align*}
         [\tau_A M] = [M'] = [\rad M'] + [M'/\rad M'] = [\tau_A (\rad M)] + [\tau_A (M/\rad M)]
         \intertext{which is by the induction hypothesis equal to }
               \Phi [\rad M] + \Phi [M/\rad M] = \Phi\big([\rad M] + [M/\rad M]\big) = \Phi[M].
      \end{align*}
      This completes the induction step.

      Finally, if $A$ has no projective simple module, then each \tm $\Phi$ satisfies $\Phi[S] = [\tau_A S]$ for each simple module $S \in \mod A$.
      Hence, the \tm is uniquely determined.
   \end{proof}

   \section{Algebras not covered by this paper}

   There are examples of algebras which admit a $\tau$-map but are neither hereditary nor Nakayama algebras.
   Such algebras can arise from certain kinds of gluings of Nakayama algebras to hereditary algebras.
   For instance the two algebras of the form $\kk Q/\rad^2 \kk Q$ where $Q$ is
   \includepdf{tikz/branches}
   both admit a \tm, even though they are neither Nakayama algebras nor hereditary themselves.
   That these algebras indeed admit $\tau$-maps can be read of their Auslander--Reiten quivers which are given by
   \includepdf{tikz/ar-quivers}

   Notice, the Auslander--Reiten quiver arises in both cases as the gluing of the Auslander--Reiten quiver of a hereditary algebra and an acyclic Nakayama algebra.

   One can ask the question whether all algebras which admit a \tm and have an acyclic $\Ext^1$-quiver can be constructed by a similar gluing construction.
   We want to put this question for further research.

   \begin{acknowledgment}
      Many thanks to Peter Jørgensen for his patience, corrections and advice.
      Thanks to Steffen König for his suggestions and especially for pointing out \Cref{theorem:arsnakayama} to me.
      Thanks to the anonymous referee for suggestions and corrections. 

      This work has been financially supported by the Aarhus University Research Foundation (grant no. AUFF-F-2020-7-16).
   \end{acknowledgment}
   
   \bibliographystyle{alpha}
   

\end{document}